\providecommand{\Z}{}
\renewcommand{\Z}{\mathbb{Z}}
\newcommand{\I}[1]{{\mathbbm 1}_{#1}}
\newcommand\cB{\mathcal B}
\newcommand\cD{\mathcal D}
\newcommand\cU{{\mathcal U}}
\newcommand{\optionaldesc}[2]{%
  \phantomsection
  #1\protected@edef\@currentlabel{#1}\label{#2}%
}
\newcommand{\pran}[1]{\left(#1\right)}
\providecommand{\ora}[1]{}
\renewcommand{\ora}[1]{\overrightarrow{#1}}
\DeclareRobustCommand{\SkipTocEntry}[5]{} 
\newcommand{\tree}{\mathrm{t}}
\newcommand{\tfree}{\mathrm{t}^{\mathrm{free}}}
\newcommand{\tfix}{\mathrm{t}^{\mathrm{fix}}}
\newcommand{\tspn}[1]{\tree^{\mathrm{spn}}_{#1}}
\renewcommand{\path}{\mathrm{d}}
\newcommand{\HS}{\bar{\mathcal{S}}}
\newcommand{\RHS}{\mathcal{S}}
\newcommand{\HSv}[1]{\RHS_{#1}}
\newcommand{\halfceil}[1]{\lceil\frac{#1}{2}\rceil}
\newcommand{\halffloor}[1]{\lfloor\frac{#1}{2}\rfloor}
\newtheorem{thm}{Theorem}
\newtheorem{lem}[thm]{Lemma}
\newtheorem{prop}[thm]{Proposition}
\newtheorem{cor}[thm]{Corollary}
\numberwithin{equation}{section}
\numberwithin{thm}{section}
\newcommand{\vp}{\varphi}
\newcommand{\ints}[1]{\llbracket #1 \rrbracket}
\begin{document}
\date{June 5, 2024} 

\title{Refined Horton-Strahler numbers I: a discrete bijection} 
\author{Louigi Addario-Berry}
\address{Department of Mathematics and Statistics, McGill University, Montr\'eal, Canada}
\email{louigi.addario@mcgill.ca}

\author{Marie Albenque}
\address{Universit\'e Paris Cit\'e, CNRS, IRIF, F-75013, Paris, France}
\email{malbenque@irif.fr}

\author{Serte Donderwinkel}
\address{University of Groningen, Bernoulli Institute for Mathematics, Computer Science and AI, and CogniGron (Groningen Cognitive Systems and
Materials Center), Groningen, The Netherlands}
\email{s.a.donderwinkel@rug.nl}

\author{Robin Khanfir}
\address{Department of Mathematics and Statistics, McGill University, Montr\'eal, Canada}
\email{robin.khanfir@mcgill.ca}

\subjclass[2010]{05C05,05A19,05C30} 

\begin{abstract} 
The Horton--Strahler number of a rooted tree $T$ is the height of the tallest complete binary tree that can be homeomorphically embedded in $T$. The number of full binary trees with $n$ internal vertices and Horton-Strahler number $s$ is known \cite{FLAJOLET79} to be the same as the number of Dyck paths of length $2n$ whose height $h$ satisfies $\lfloor \log_2(1+h)\rfloor=s$. 

In this paper, we present a new bijective proof of the above result, that in fact strengthens and refines it as follows. We introduce a sequence of trees $(\tau_i,i \ge 0)$ which ``interpolates'' the complete binary trees, in the sense that $\tau_{2^h-1}$ is the complete binary tree of height $h$ for all $h \ge 0$, and $\tau_{i+1}$ strictly contains $\tau_i$ for all $i \ge 0$. Defining $\RHS(T)$ to be the largest $i$ for which $\tau_i$ can be homeomorphically embedded in $T$, we then show that the number of full binary trees $T$ with $n$ internal vertices and with $\RHS(T)=h$ is the same as the number of Dyck paths of length $2n$ with height $h$. (We call $\RHS(T)$ the {\em refined Horton--Strahler number} of $T$.)

Our proof is bijective and relies on a recursive decomposition of binary trees (resp.\ Dyck paths) into subtrees with strictly smaller refined Horton--Strahler number (resp.\ subpaths with strictly smaller height). In a subsequent paper \cite{US2}, we will show that the bijection has a continuum analogue, which transforms a Brownian continuum random tree into a Brownian excursion and under which (a continuous analogue of) the refined Horton--Strahler number of the  tree becomes the height of the excursion. 
\end{abstract} 
\keywords{Horton--Strahler number, register function, refined Horton--Strahler number, binary trees, Dyck paths}
\maketitle

\section{Introduction}
The purpose of this paper is to introduce a quantity we call the {\em refined Horton--Strahler number}, and to use it to strengthen existing bijective results on binary trees with given Horton--Strahler number. In this section, we first define the Horton--Strahler number and provide a brief literature review, then define our refinement and present the contributions of this paper.

Let $\cU:=\{1,2\}^*=\{\emptyset\}\cup \bigcup_{\ell\geq 1} \{1,2\}^\ell$. For $u=u_1u_2\ldots u_\ell \in \cU$ we write $|u|=\ell$ for the length of $u$; we take $|\emptyset|=0$ by convention. The ancestors of $u$ are $\{u_1u_2\dots u_k:0\le k \le |u| \}$. For $u,v\in\cU$, we write both $uv$ and $u\star v$ to denote the concatenation of the word $u$ with the word $v$. For $V\subset \cU$ we also write $uV=u*V:=\{uv\, :\, v\in V\}$. 

A {\em binary tree} is a finite set $\tree\subset \cU$ with $\emptyset \in \tree$ such that for all $u=u_1\ldots u_\ell \in \cU$, if $u \in \tree$ then  $p(u):=u_1\ldots u_{\ell-1} \in \tree$ in $\tree$. The elements of $\tree$ are its {\em vertices.} A vertex $v \in \tree$ is {\em internal} if either $v1 \in \tree$ or $v2 \in \tree$; otherwise it is a {\em leaf}. A binary tree $\tree$ is {\em full} if for all $u \in \tree$, $u1 \in \tree$ if and only if $u2 \in \tree$. 

We write $\preceq$ for the ancestral partial order on $\cU$ and $\preceq_{\mathrm{lex}}$ for the lexicographic order on $\cU$. Also, for $u,v \in \cU$ we write $u \wedge v$ for the most recent common ancestor of $u$ and $v$ in~$\cU$. 

Given two trees $\tree,\tree'$, an injective function $\varphi:\tree \to \tree'$ is an {\em embedding} if it 
is strictly increasing with respect to $\preceq_{\mathrm{lex}}$ and additionally satisfies $\varphi(u \wedge v)=\varphi(u)\wedge \varphi(v)$  for all $u,v \in \tree$. We say $\tree$ can be embedded in $\tree'$ if and only if there exists an embedding $\varphi:\tree \to \tree'$. 

The {\em complete binary tree of height $s$} is the tree $\mathrm{cb}(s):=\{u \in \cU: 0 \le |u| \le s\}$. 
The {\em Horton--Strahler number} of a binary tree $\tree$ is defined as 
\[
\HS(\tree):= \max(s: \mathrm{cb}(s)\mbox{ can be embedded in }\tree).
\]
For example, the tree $\tree$ on the left of Figure~\ref{fig:exHS} has $\HS(\tree)=2$; the center of the figure depicts an embedding of $\mathrm{cb}(2)$ into $\tree$. 
\begin{figure}[htb]
    \centering
    \includegraphics[page=1,width=0.25\linewidth]{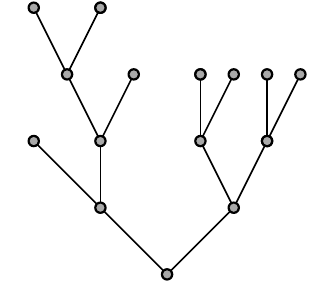}\qquad\qquad
    \includegraphics[page=2,width=0.25\linewidth]{images/HSrefined_binary.pdf}\qquad\qquad
    \includegraphics[page=3,width=0.25\linewidth]{images/HSrefined_binary.pdf}
    \caption{Example of the plane tree $\tree$, together with one embedding of $\mathrm{cb}(2)$ (middle) and one embedding of $\tau_5$ (right). Neither $\mathrm{cb}(3)$ nor $\tau_6$ can be embedded in $\tree$, so $\tree$ has Horton--Strahler number $\HS(\tree)=2$ and refined Horton--Strahler number $\RHS(\tree)=5$. The definition of $\tau_i$ is given later in the introduction and is illustrated in Figure~\ref{fig:hs_example}}.
    \label{fig:exHS}
\end{figure}

Originally introduced in hydrogeology by Horton~\cite{Hor45} in 1945 and Strahler~\cite{Str52} in 1952 to quantitatively study river systems, the Horton--Strahler number has since found many and varied fields of application. Among others areas, the Horton--Strahler number appears in physics, chemistry, biology, and social network analysis. It is also particularly useful in computer science, where it is called the \emph{register function}, and is used in  optimizing the manipulation of certain data structures. An overview of many of these  applications is given by Viennot~\cite{Vie90}. More specifically in mathematics, we refer to Esparza, Luttenberger \& Schlund~\cite{esparza} for a review of the occurrences of the Horton--Strahler number in combinatorics, algebra, logic, topology, and approximation theory. Finally, most of the previous probabilistic works on the topic were focused on the asymptotic behavior of the Horton--Strahler number of large random trees: see  Flajolet, Raoult \& Vuillemin \cite{FLAJOLET79}, Kemp \cite{KEMP79}, Devroye \& Kruszewski~\cite{Dev95}, Drmota \& Prodinger~\cite{Drm06},  Brandenberger, Devroye \& Reddad~\cite{BraDevRed21}, and Khanfir~\cite{Kha23,Kha24}. We also mention the works of Burd, Waymire \& Winn~\cite{Burd} and Kovchegov, Xu \& Zaliapin~\cite{KovZal20,KovZal21,KovXuZal23}, which study the Horton--Strahler number through the lens of dynamical systems theory.

In order to state our results,
it is convenient to introduce the notation $\ints{j,k}=\{j,j+1,\ldots,k\}$ for $j,k \in \Z_{\ge 0}$ with~$j \le k$. 
A {\em Dyck path of length $2n$} is a function 
$\mathrm{d}:\ints{0,2n} \to \Z_{\ge 0}$ with $\mathrm{d}(0)=\mathrm{d}(2n)=0$ and $|\mathrm{d}(i)-\mathrm{d}(i-1)|=1$ for all $i \in \{1,\ldots,2n\}$. The {\em height} of $\mathrm{d}$ is $\|\path \|:=\max(\path (i),i \in \ints{0,2n})$.
Write $\mathcal{D}(n)$ for the set of Dyck paths of length $2n$, and for $s \in \Z_{\ge 0}$ let 
\[
\overline{\cD}_{n,s}=\{\mathrm{d} \in \cD(n): \lfloor \log_2(1+\|\mathrm{d}\|)\rfloor=s\}\,
\]
be the set of Dyck paths with logarithmic height $s$. 
Also, let $\cB(n)$ be the set of full binary trees with $n$ internal vertices, and for $s \in \Z_{\ge 0}$ let $\overline{\cB}_{n,s}=\{\tree \in \cB(n): \HS(\tree)=s\}$ 
be the elements of $\cB(n)$ with Horton--Strahler number $s$. Flajolet, Raoult \& Vuillememin~\cite{FLAJOLET79} and Kemp~\cite{KEMP79} independently
 showed that for all $n \in \Z_{\ge 0}$ and $s \in\Z_{\ge 0}$,
\begin{equation}\label{eq:francon}
|\overline{\cD}_{n,s}|=|\overline{\cB}_{n,s}|;
\end{equation}
a bijective proof of this fact was found by Fran\c con \cite{franccon1984nombre} (see also Viennot~\cite{GERARDVIENNOT2002317} and Zeilberger~\cite{ZEILBERGER199089}). The fact that this identity involves the logarithmic height $\lfloor \log_2(1+\|\path \|)\rfloor$ suggests that a refinement of \eqref{eq:francon}, which uses the actual height, should exist; showing that this is indeed the case is the main goal of the paper.

The refinement of the set $\overline{\cD}_{n,s}$ is easy; for $h \in \Z_{\ge 0}$ let 
\[
\mathcal{D}_{n,h}=\{\mathrm{d}\in \cD(n):\|\mathrm{d}\|=h\},
\text{ so that }\quad\overline{\cD}_{n,s}=\bigcup_{h=2^s-1}^{2(2^s-1)} \mathcal{D}_{n,h}\, . 
\]

The refinement of the set $\overline{\cB}_{n,s}$ is slightly more involved than that of $\overline{\cD}_{n,s}$. Define the {\em refined Horton--Strahler number} as follows. Let $\tau_0=\emptyset$ consist of just a root vertex and let $\tau_1=\{\emptyset,1,2\}$ be the full binary tree of height $1$. Then, for $m\geq 1$, let $\tau_{2m}=\{\emptyset\}\cup (1\star \tau_{m})\cup (2\star \tau_{m-1})$ be the binary tree with $\tau_m$ rooted at the left child of the root and $\tau_{m-1}$ rooted at the right child of the root and, similarly, let $\tau_{2m+1}=\{\emptyset\}\cup (1\star \tau_{m})\cup (2 \star \tau_{m})$.  The trees $\tau_0,\ldots,\tau_7$ are depicted in Figure~\ref{fig:hs_example}. It is straightforward to see that for all $r$ it holds that $\tau_r\subset \tau_{r+1}$, and that $\tau_{2^s-1}=\mathrm{cb}(s)$ for all $s \in \Z_{\ge 0}$ 
\begin{figure}[htb]
    \centering
    \includegraphics[width=0.7\textwidth]{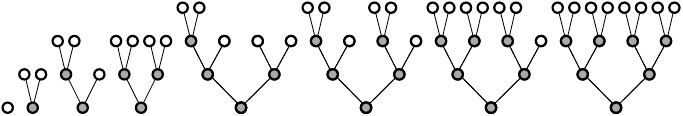}
    \caption{The trees $\tau_0,\ldots,\tau_7$ are depicted in order from left to right.}
\label{fig:hs_example}
\end{figure}

We define
\[ 
\RHS(\tree):=\max\{ r: \tau_r  \text{ can be embedded in }\tree\}.
\]
For example, the tree $\tree$ on the left of Figure~\ref{fig:exHS} has $\RHS(\tree)=5$; a depiction of an embedding of $\tau_5$ in $\tree$ appears on the right of the figure.
For $n \in \Z_{\ge 0}$ and $h \in \Z_{\ge 0}$, let  
\[
\cB_{n,h}=\{\tree \in \cB(n): \RHS(\tree)=h\}\, ,
\]
Note that for $s \ge 0$, the trees $\tau_{2^s-1},\ldots,\tau_{2(2^s-1)}$ all contain $\mathrm{cb}(s)$ and are strictly contained in $\mathrm{cb}(s+1)$, so 
\[
\overline{\cB}_{n,s} = \bigcup_{h=2^s-1}^{2(2^s-1)} \cB_{n,h}\, .
\]
The following is our main result.

\begin{thm}\label{thm:main}
 For all integers $n \ge 0$ and $h \ge 0$,  $|\cB_{n,h}|=|\cD_{n,h}|$.
\end{thm}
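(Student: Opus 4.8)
The plan is to prove Theorem~\ref{thm:main} by establishing a bijection between $\cB_{n,h}$ and $\cD_{n,h}$ for each pair $(n,h)$, and the natural way to do this is by induction on $h$, exploiting a recursive decomposition on both sides that strips off the ``top level'' of the refined Horton--Strahler structure (resp.\ the level-set structure of the Dyck path at its maximum height). I will first need to understand the recursive structure of $\RHS$: observe that $\tau_{2m}$ and $\tau_{2m+1}$ are built from $\tau_m$ and $\tau_{m-1}$ (resp.\ two copies of $\tau_m$) glued at a root, which should translate into a statement of the form: if $\tree$ is a full binary tree with subtrees $\tree_1,\tree_2$ hanging off the root, then $\RHS(\tree)$ is determined by $\RHS(\tree_1)$ and $\RHS(\tree_2)$ via an explicit rule mimicking the $r\mapsto 2m,2m+1$ recursion (the analogue of the classical fact $\HS(\tree)=\max(\HS(\tree_1),\HS(\tree_2))+\I{\HS(\tree_1)=\HS(\tree_2)}$). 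Concretely I expect: writing $a=\RHS(\tree_1)\ge b=\RHS(\tree_2)$, one gets $\RHS(\tree)=2b+1$ if $a>b$ and... no — one must be careful, since $\RHS$ is not symmetric in the two children in the way $\tau_{2m}$ treats left vs.\ right. The correct statement should sort $\{\RHS(\tree_1),\RHS(\tree_2)\}$ and read off whether we can embed $\tau_{2m}$ (needs the larger into one child and $\tau_{m-1}$ into the other) or $\tau_{2m+1}$ (needs $\tau_m$ into both).

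**Second**, I would set up the parallel decomposition of Dyck paths by height. A Dyck path $\path$ of height $h$ decomposes at its returns to $0$ into a concatenation of ``arches'' (excursions above $0$ of the form $0,1,\dots,1,0$), each of which, after removing the first and last step, is a Dyck path of some height $h_i-1$; the overall height is $h=1+\max_i(h_i-1)=\max_i h_i$ — hmm, rather $\|\path\| = 1 + \max_i \|\path_i\|$ where $\path_i$ are the interiors. This is not quite the right two-fold splitting to mirror the binary-tree recursion. A cleaner route: decompose a Dyck path of height $h\ge 1$ by looking at its \emph{first} visit to its maximum, or better, split off at the first descent step from level $1$, giving $\path = 0\cdot\path' \cdot 0 \cdot \path''$ where $\path'$ shifted down is a Dyck path and $\path''$ is a Dyck path, with $\|\path\| = \max(1+\|\path'\|, \|\path''\|)$. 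Iterating peels Dyck paths into a left/right comb structure matching the binary-tree spine. The key step will be to define, on both sides, a recursive map $\Phi$ that peels one unit of ``$\RHS$/height'' at a time: given $\tree\in\cB_{n,h}$, locate the embedded copy of $\tau_h$ (or rather the canonical spine realizing it), cut $\tree$ into pieces each with strictly smaller $\RHS$, apply $\Phi$ recursively to each piece to get Dyck paths of strictly smaller height, and reassemble into a Dyck path of height exactly $h$; then check this is invertible.

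**The main obstacle** I anticipate is twofold. First, pinning down the exact recursion satisfied by $\RHS$ under the root-splitting of a full binary tree — this is the combinatorial heart, and getting the asymmetry between the two children right (because $\tau_{2m}=\{\emptyset\}\cup(1\star\tau_m)\cup(2\star\tau_{m-1})$ privileges the left child) requires care; one may need to prove $\RHS(\tree) = \max\{2\min(a,b)+1,\ 2\min(a+1,b) \text{ or } 2\min(a,b+1)\}$-type formula and verify it cleanly by the embedding definition, using that $\tau_r\subset\tau_{r+1}$ and induction. Second, matching the \emph{count} — even with compatible recursions on both sides, showing the induced generating-function or counting recursions coincide term-by-term, tracking the number of internal vertices $n$ correctly through the decomposition (each peeled level costs a fixed number of vertices/steps), and verifying the base cases $h=0$ ($|\cB_{n,0}|=|\cD_{n,0}|=\I{n=0}$) and $h=1$. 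If the direct bijection proves too delicate, a fallback is to show both $|\cB_{n,h}|$ and $|\cD_{n,h}|$ satisfy the same recursion in $(n,h)$ with the same initial conditions, which sidesteps explicitly constructing the bijection while still proving the theorem; but given the paper's abstract emphasizes a bijective proof, I would push to make the recursive peeling map explicit and invertible, likely organized as a theorem that the peeling decomposition is a bijection between $\cB_{n,h}$ and tuples $(\text{shape data}, \text{smaller-}\RHS\text{ trees})$ matching the analogous decomposition of $\cD_{n,h}$.
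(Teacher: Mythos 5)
Your high-level framework --- induction on $h$, a recursive decomposition on both sides, and a recursively assembled bijection --- is the same as the paper's, and your anticipated recursion for $\RHS$ at the root is essentially correct (the paper's Proposition~\ref{prop:recursive_RHS} gives $\RHS(\tree)=\max\big(\RHS_1(\tree),\RHS_2(\tree),2\min(\RHS_1(\tree),\RHS_2(\tree))+\I{\RHS_1(\tree)>\RHS_2(\tree)}+1\big)$). But there is a genuine gap at the heart of the plan: the two decompositions you propose do not match each other, and no reassembly (nor the ``same recursion'' fallback) can work with them. On the tree side, splitting at the root produces children whose $\RHS$ values are roughly \emph{half} of $h$ (the recursion doubles the minimum), while on the Dyck path side both of your candidates --- the arch decomposition at level $0$ and the split at the first descent from level $1$ --- produce pieces of height roughly $h-1$ (the height recursion only increments). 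The generating-function recursions induced by these two decompositions pair off different index sets, so they cannot coincide term by term; this mismatch is exactly the reason the classical statement \eqref{eq:francon} involves $\lfloor\log_2(1+\|\path\|)\rfloor$ rather than $\|\path\|$.

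The missing idea is that both objects must be cut so that the distinguished pieces have parameter about $h/2$. The paper cuts the tree not at the root but along the spine to the $\preceq_{\mathrm{lex}}$-maximal vertex $u$ with $\RHS_u(\tree)=h$, obtaining a subtree $\tfix$ with $\RHS(\tfix)=\lceil h/2\rceil-1$ exactly, a subtree $\tfree$ with $\RHS(\tfree)\in\ints{\lfloor h/2\rfloor,h-1}$, and spine subtrees with $\RHS\le(h-v_j)/2$ (Corollary~\ref{cor:maxRHS}); and it cuts the Dyck path at the level $m=\lfloor h/2\rfloor$ around its first maximum (via $\sigma_g,\sigma_d,\sigma_{\mathrm{last}}$ and the returns $\rho_j$ to level $m$), obtaining pieces whose heights satisfy the identical constraints (Lemma~\ref{lem:path_obvs}). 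Only with this aligned ``halving'' decomposition do the two product sets $\cB_h'$ and $\cD_h'$ coincide component by component, which is what makes the recursive bijection $\Phi$ well defined and invertible. Without identifying the half-height level as the place to cut the path, and the spine to $u$ (rather than just the root) as the place to cut the tree, your construction would stall at the reassembly step.
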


The identity \eqref{eq:francon} follows by taking a union over $h \in \ints{2^s-1,2(2^s-1)}$. Our proof of Theorem~\ref{thm:main} appears in Section~\ref{sec:bij0}. (Section~\ref{sec:intro_refined_strahler} presents a recursive decomposition of full binary trees that is used in the proof.)

Our proof is bijective, and like Fran\c con's bijective proof of \eqref{eq:francon}, our bijection is recursively constructed. However, our bijection is not equivalent to that of Fran\c con --- this can be verified by considering the images of the Dyck path 01234543210 under the two bijections. It would be interesting to know whether Fran\c con's bijection can also be refined in order to prove Theorem~\ref{thm:main}. It would also be very interesting to find a non-recursive bijection between the sets $\cB_{n,h}$ and $\cD_{n,h}$.

As mentioned in the abstract, in a subsequent paper in preparation \cite{US2}, we will describe a continuous analogue of the bijection we use to prove Theorem~\ref{thm:main}, which relates Brownian continuum random trees of a given (continuous) Horton-Strahler number $h$ to Brownian excursions of height $h$.





We conclude the introduction by defining some notation that will be useful in the sequel.
For a binary tree $\tree$ and $u \in \tree$, we let $\theta_u \tree$ be the subtree of $\tree$ rooted at $u$; formally 
\[\theta_u\tree=\{v:uv\in \tree\}.\]
We will sometimes write $\RHS_u(\tree):=\RHS(\theta_u\tree)$ for the refined Horton--Strahler number of $\theta_u\tree$.


\section{A spinal decomposition of binary trees}\label{sec:intro_refined_strahler}

\begin{prop}\label{prop:recursive_RHS}
The refined Horton--Strahler number $\RHS{(\tree)}$ of a full binary tree $\tree$ has the following recursive definition.   
\begin{enumerate}
\item[$(a)$] $\RHS(\tree)=0$ if and only $\tree=\{\emptyset\}$.
\item[$(b)$] Otherwise,
\[
\RHS(\tree)=
\max\left(\RHS_1(\tree),\RHS_2(\tree),2\min\big(\HSv{1}(\tree),\HSv{2}(\tree)\big)+\I{\HSv{1}(\tree)>\HSv{2}(\tree)}+1\right)\, .
\]
\end{enumerate}
\end{prop}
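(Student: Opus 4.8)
\emph{Proof sketch.} The plan is to dispatch part $(a)$ directly and to reduce part $(b)$ to a clean statement about which of the trees $\tau_r$ embed in $\tree$, followed by an elementary arithmetic identity. For $(a)$: $\tau_0=\{\emptyset\}$ embeds in every binary tree, while $\tau_1=\{\emptyset,1,2\}$ embeds in $\tree$ exactly when $\tree$ has an internal vertex. Indeed, if $\vp$ is an embedding of $\tau_1$ then $\vp(1)\wedge\vp(2)=\vp(1\wedge 2)=\vp(\emptyset)$ forces $\vp(1)$ and $\vp(2)$ to be strict descendants of $\vp(\emptyset)$ lying in distinct child-subtrees of $\vp(\emptyset)$, so $\vp(\emptyset)$ is internal; conversely, if $w\in\tree$ is internal then $w1,w2\in\tree$ by fullness and $\emptyset\mapsto w$, $1\mapsto w1$, $2\mapsto w2$ is an embedding. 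Since a full binary tree has an internal vertex if and only if it is not $\{\emptyset\}$, this gives $(a)$.

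For $(b)$, fix $\tree\neq\{\emptyset\}$. The key step is to establish, for every $r\ge 1$ (writing $\tau_r=\{\emptyset\}\cup(1\star\tau_{\lfloor r/2\rfloor})\cup(2\star\tau_{\lfloor(r-1)/2\rfloor})$, which unifies the two cases of the definition and uses $\tau_0=\{\emptyset\}$), that $\tau_r$ embeds in $\tree$ if and only if $\tau_r$ embeds in $\theta_1\tree$, or $\tau_r$ embeds in $\theta_2\tree$, or both $\tau_{\lfloor r/2\rfloor}$ embeds in $\theta_1\tree$ and $\tau_{\lfloor(r-1)/2\rfloor}$ embeds in $\theta_2\tree$. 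For the forward direction I would take an embedding $\vp\colon\tau_r\to\tree$ and examine $w=\vp(\emptyset)$; as in $(a)$, $w\preceq\vp(u)$ for all $u\in\tau_r$. If $w\neq\emptyset$ then the image of $\vp$ lies in $i\star\theta_i\tree$, where $i$ is the first letter of $w$, and deleting this first letter gives an embedding of $\tau_r$ in $\theta_i\tree$. If $w=\emptyset$ then $\vp(1)\wedge\vp(2)=\emptyset$ together with $\vp(1)\prec_{\mathrm{lex}}\vp(2)$ forces $\vp(1)$ to begin with $1$ and $\vp(2)$ to begin with $2$ --- the left subtree of $\tau_r$ \emph{cannot} be sent into $\theta_2\tree$ --- so restricting $\vp$ to the two root-subtrees of $\tau_r$ and deleting the first letter yields embeddings of $\tau_{\lfloor r/2\rfloor}$ in $\theta_1\tree$ and of $\tau_{\lfloor(r-1)/2\rfloor}$ in $\theta_2\tree$. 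The reverse direction is by the evident gluing constructions (prepend a letter, or graft two embeddings below a fresh root), checking in each case that the lexicographic monotonicity and the common-ancestor identity survive. I expect this planar-order step to be the crux: it is exactly what breaks the left/right symmetry and is responsible for the asymmetric indicator in the formula.

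To conclude: since $\tau_k\subset\tau_{k+1}$, restricting an embedding shows embeddability is monotone, so $\tau_k$ embeds in a tree $T'$ if and only if $k\le\RHS(T')$, and $\{r\ge 0:\tau_r\text{ embeds in }\tree\}=\{0,1,\dots,\RHS(\tree)\}$. Feeding the characterization into this and taking the largest admissible $r$ gives
\[
\RHS(\tree)=\max\Bigl(\RHS_1(\tree),\RHS_2(\tree),\max\bigl\{r:\lfloor r/2\rfloor\le\RHS_1(\tree)\text{ and }\lfloor(r-1)/2\rfloor\le\RHS_2(\tree)\bigr\}\Bigr).
\]
Finally, $\lfloor r/2\rfloor\le A\iff r\le 2A+1$ and $\lfloor(r-1)/2\rfloor\le B\iff r\le 2B+2$, so the inner maximum equals $\min\bigl(2\RHS_1(\tree)+1,\,2\RHS_2(\tree)+2\bigr)$, and a two-case check ($A\le B$ versus $A>B$) yields the identity $\min(2A+1,2B+2)=2\min(A,B)+\I{A>B}+1$, which rewrites the right-hand side as the claimed formula.
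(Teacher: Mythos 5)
Your proposal is correct and follows essentially the same route as the paper's proof: a trichotomy on the location of $\vp(\emptyset)$ (in $\theta_1\tree$, in $\theta_2\tree$, or at the root), the observation that the lexicographic order forces the left subtree of $\tau_r$ into $\theta_1\tree$ and the right into $\theta_2\tree$, and the floor-arithmetic identity $\min(2A+1,2B+2)=2\min(A,B)+\I{A>B}+1$. You simply spell out a few details (the monotonicity of embeddability in $r$ and the verification for part $(a)$) that the paper leaves implicit.
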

\begin{proof}
  Statement (a) is immediate from the definition of the refined Horton--Strahler number. 
  For statement (b), note that an embedding $\vp:\tau_r \to \tree$ must satisfy one of the following three conditions:
  \begin{enumerate}
  \item $1$ is an ancestor of $\vp(\emptyset)$;
  \item $2$ is an ancestor of $\vp(\emptyset)$;
  \item $\vp(\emptyset)=\emptyset$.
  \end{enumerate}
   Observe from the definition of embeddings that $\vp(\emptyset)\preceq \vp(u)$ for all $u\in \tau_r$, so if (1) holds then setting $1*\hat{\vp}(u)=\vp(u)$ for all $u\in \tau_r$ defines a function $\hat{\vp}:\tau_r\to\theta_1 t$. It is then easy to check that $\hat{\vp}$ is an embedding. Conversely, if $\tilde{\vp}:\tau_r\to\theta_1 t$ is an embedding then $u\in \tau_r\mapsto 1*\tilde{\vp}(u)\in t$ is also one. Thus, the maximum value of $r$ for which $\vp$ can be chosen so that (1) (resp.\ (2)) holds is $\RHS_1(\tree)$ (resp.\ $\RHS_2(\tree)$). 
   
  
  If $\vp$ satisfies (3), then necessarily $1$ is an ancestor of $\vp(1)$ and $2$ is an ancestor of $\vp(2)$. It follows that
  $\vp|_{1*\theta_1\tau_r}$ induces an embedding of $\theta_1\tau_r$ into $\theta_1\tree$ and 
  $\vp|_{2\star \theta_2\tau_r}$ induces an embedding of $\theta_2\tau_r$ into $\theta_2\tree$. Since 
  $\theta_1\tau_r=\tau_{\lfloor r/2\rfloor}$ and $\theta_2\tau_r=\tau_{\lfloor (r-1)/2\rfloor}$, an embedding $\vp$ of $\tau_r$ can thus be chosen to satisfy (3) if and only if $\lfloor r/2\rfloor \le \RHS_1(\tree)$ and $\lfloor (r-1)/2\rfloor \le \RHS_2(\tree)$; this is equivalent to requiring that $r \le 2\min(\RHS_1(\tree),\RHS_2(\tree))+\I{\RHS_1(\tree)>\RHS_2(\tree)}+1$. The result follows.  
 \end{proof}
For any full binary tree $\tree$ and any internal vertex $v \in \tree$, applying Proposition~\ref{prop:recursive_RHS} to the subtree $\theta_v\tree$ yields that 
\begin{equation}\label{eq:recursive_bd}
\HSv{v}(\tree)
=
\max\big(\HSv{v1}(\tree),\HSv{v2}(\tree),2\min\big(\HSv{v1}(\tree),\HSv{v2}(\tree)\big)+\I{\HSv{v1}(\tree)>\HSv{v2}(\tree)}+1\big)\, ,
\end{equation}
an identity which we will shortly use. 

The definitions of the coming paragraph are depicted in Figure~\ref{fig:decompoTrees}. 
For a full binary tree $\tree\ne \{\varnothing\}$, write $m=m(\tree)=\lfloor \RHS(\tree)/2\rfloor$, write $u=u(\tree)$ for the $\preceq_{\mathrm{lex}}$-maximal vertex of $\tree$ for which $\HSv{u}(\tree)=\RHS(\tree)$, and let $\ell=\ell(\tree)=|u|$, so that $u=u_1\ldots u_\ell\in \{1,2\}^{\ell}$. 
For all $1\leq j\leq \ell$, set $v_j=v_j(\tree)=3-u_j=1+\I{u_j=1}$ and define
\[\tree_j^{\mathrm{spn}}=\theta_{u_1\ldots u_{j-1}v_j} \tree,\]
so that, informally, $\tspn{1},\dots,\tspn{\ell}$ are the trees hanging off the path from the root to $u$, and $v_1,\dots, v_\ell$ encode whether each of these trees hangs off the path on the left or the right. If $\RHS(\tree)\ge 1$ then $u$ is an internal vertex, and in this case we define
\[
\tfree
=
\begin{cases}
    \theta_{u1}\tree &\mbox{ if $\RHS(\tree)$ is even}\\
    \theta_{u2}\tree &\mbox{ if $\RHS(\tree)$ is odd,}
\end{cases} 
\quad \mbox{ and } \quad
\tfix=
\begin{cases}
    \theta_{u2}\tree &\mbox{ if $\RHS(\tree)$ is even}\\
    \theta_{u1}\tree &\mbox{ if $\RHS(\tree)$ is odd.}
\end{cases}
\]

\begin{figure}[hbt]
    \centering
    \subcaptionbox{Embedding of $\tau_2$ rooted at $u$}[0.28\linewidth]{\includegraphics[page=1,width=0.9\linewidth]{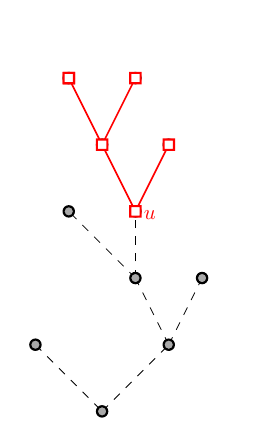}}\qquad
     \subcaptionbox{Value of $\RHS(\theta_v\tree)$ for all $v\in \tree$.}[0.28\linewidth]{\includegraphics[page=2,width=0.9\linewidth]{images/DecompositionTrees.pdf}}\qquad
     \subcaptionbox{The decomposition of $\tree$ along the ancestral path of~$u$.}[0.28\linewidth]{\includegraphics[page=3,width=0.9\linewidth]{images/DecompositionTrees.pdf}}
    \caption{Decomposition of a binary tree $\tree$ with $\RHS(\tree)=2$. This tree has $m(\tree)=1$, $u(\tree)=212$, $\ell(\tree)=|u|=3$ and $v_1,v_2,v_3=1,2,1$.}
    \label{fig:decompoTrees}
\end{figure}

Proposition~\ref{prop:recursive_RHS} implies the following result.
\begin{cor}\label{cor:maxRHS}
Let $\tree \ne \{\emptyset\}$ be a full binary tree. Then the vertices in $P:=\{v\in \tree : \HSv v(\tree)=\RHS(\tree)\}$ are precisely the ancestors of $u(\tree)$, and for any $1\leq j\leq \ell$, we have
\begin{equation}\label{eq:IneqAlongPath}
\RHS(\tree_j^{\mathrm{spn}})\le \frac{\RHS(\tree) -v_j}{2}.
\end{equation}
Moreover,
\begin{equation}\label{eq:freeFix}
    \RHS(\tree^{\mathrm{fix}})=\lceil \RHS(\tree)/2\rceil -1 =\lfloor (\RHS(\tree)-1)/2\rfloor\, ,
\end{equation}
and
\begin{equation}\label{eq:tfreebd}
\lfloor \RHS(\tree)/2\rfloor \le \RHS(\tfree)\leq \RHS(\tree)-1\, .
\end{equation}
\end{cor}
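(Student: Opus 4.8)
The strategy is to extract everything from Proposition~\ref{prop:recursive_RHS} applied to $\tree$ and to the subtrees $\theta_v\tree$, keeping careful track of the definitions of $u=u(\tree)$ as the $\preceq_{\mathrm{lex}}$-maximal vertex achieving $\RHS_v(\tree)=\RHS(\tree)$, and of $\tfree,\tfix$ as the two children-subtrees of $u$.

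First I would establish that $P=\{v\in\tree:\RHS_v(\tree)=\RHS(\tree)\}$ is exactly the set of ancestors of $u$. The inclusion ``ancestors of $u$ lie in $P$'' is shown by downward induction along the path from $u$ to the root using \eqref{eq:recursive_bd}: if $\RHS_{vi}(\tree)=\RHS(\tree)$ for a child $vi$ of $v$, then the $\max$ in \eqref{eq:recursive_bd} is at least $\RHS(\tree)$ at $v$, and since $\RHS_v(\tree)\le\RHS(\tree)$ (as $\theta_v\tree$ embeds in $\tree$, or by definition of $\RHS(\tree)$ as a max), we get $\RHS_v(\tree)=\RHS(\tree)$. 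For the reverse inclusion, suppose $v\in P$ is not an ancestor of $u$. If $v$ were a strict descendant of $u$, then $\RHS_v(\tree)\le\RHS_u(\tree)=\RHS(\tree)$, and I'd need to rule out equality; but any vertex $w$ with $\RHS_w(\tree)=\RHS(\tree)$ forces (again by \eqref{eq:recursive_bd} applied at the parent) all ancestors of $w$ up to the root to be in $P$, so the set $P$ is a union of ancestral paths from the root, i.e.\ a subtree containing $\emptyset$; its $\preceq_{\mathrm{lex}}$-maximal element is $u$ by definition, and a subtree of $\cU$ containing a vertex not comparable to its $\preceq_{\mathrm{lex}}$-maximal vertex would have to branch, but then\dots\ — here I would argue that if $v,v'\in P$ are incomparable with common ancestor $w=v\wedge v'$, then $w\in P$ has two children both in $P$, so by \eqref{eq:recursive_bd} at $w$ we'd need $\RHS(\tree)=\max(\RHS(\tree),\RHS(\tree),2\RHS(\tree)+\cdots)$, which is impossible since $\RHS(\tree)\ge1$. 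Hence $P$ is a single ancestral path, necessarily the ancestors of $u$.

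Next, \eqref{eq:IneqAlongPath}: fix $1\le j\le\ell$ and let $w=u_1\cdots u_{j-1}$, an ancestor of $u$, so $\RHS_w(\tree)=\RHS(\tree)$. Its children are $wu_j$ (an ancestor of $u$, hence also in $P$ with $\RHS_{wu_j}(\tree)=\RHS(\tree)$) and $wv_j$ (the root of $\tspn{j}$). Since $wv_j\notin P$ we have $\RHS_{wv_j}(\tree)<\RHS(\tree)$, i.e.\ $\RHS_{wv_j}(\tree)\le\RHS_{wu_j}(\tree)$, which tells us $\HSv{wv_j}\le\HSv{wu_j}$ is \emph{not} forced — I need the Horton–Strahler (not refined) comparison here, so let me instead read off the needed inequality directly from \eqref{eq:recursive_bd} at $w$: the term $2\min(\HSv{wu_j},\HSv{wv_j})+\I{\HSv{wu_j}>\HSv{wv_j}}+1$ is at most $\RHS_w(\tree)=\RHS(\tree)$, and since $\RHS_{wv_j}(\tree)\ge\HSv{wv_j}\ge\min(\HSv{wu_j},\HSv{wv_j})$ won't suffice, I should use $\RHS_{wv_j}(\tree)\le 2\min(\cdots)+\I{\cdots}+1$ is false in general; the correct route is: from the recursive formula, $\RHS_{wv_j}(\tree)=\RHS(\tspn j)$ satisfies $2\RHS(\tspn j)+\I{v_j=1}+(\text{correction})$ appearing as a candidate value shows $\RHS(\tspn j)\le\lfloor(\RHS(\tree)-v_j)/2\rfloor$ — concretely, if $v_j=2$ then $wv_j$ is the right child and $\lfloor\RHS_w(\tree)/2\rfloor=\lfloor\RHS(\tree)/2\rfloor\ge\RHS_{w1}\ge\RHS_{wv_j}$ won't close it either; so the cleanest argument is to invoke the ``only if'' direction inside the proof of Proposition~\ref{prop:recursive_RHS}: an embedding of $\tau_{\RHS(\tree)}$ into $\theta_w\tree$ realizing the maximum must satisfy case (3) at $w$ (since cases (1),(2) would put the maximizing vertex strictly below $w$, contradicting $wu_j\in P$ being maximal only if\dots), hence $\lfloor\RHS(\tree)/2\rfloor\le\RHS_{wu_j}(\tree)$ and $\lfloor(\RHS(\tree)-1)/2\rfloor\le\RHS_{wv_j}(\tree)$ are the constraints with the roles of children determined by which of $u_j,v_j$ is $1$; matching $v_j\in\{1,2\}$ to the two floors $\lfloor\RHS(\tree)/2\rfloor=\lfloor(\RHS(\tree)-1)/2\rfloor$ or $\lceil\cdots\rceil$ gives precisely $\RHS(\tspn j)\le(\RHS(\tree)-v_j)/2$ after checking parities. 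I expect \textbf{this parity bookkeeping — correctly pinning down which child is $\tfree$ versus $\tfix$ and matching the floor/ceiling expressions to $v_j\in\{1,2\}$ — to be the main technical obstacle}, though it is entirely mechanical.

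Finally, \eqref{eq:freeFix} and \eqref{eq:tfreebd} follow by applying Proposition~\ref{prop:recursive_RHS}(b) to $\theta_u\tree$, whose refined Horton–Strahler number is $\RHS(\tree)$: the two children-subtrees are $\tfree$ and $\tfix$. By maximality of $u$ in $\preceq_{\mathrm{lex}}$, neither child of $u$ lies in $P$, so $\RHS(\theta_{u1}\tree),\RHS(\theta_{u2}\tree)<\RHS(\tree)$; thus in the formula for $\RHS_u(\tree)$ the maximum is \emph{not} attained by the first two arguments, so it must be attained by $2\min(\HSv{u1},\HSv{u2})+\I{\HSv{u1}>\HSv{u2}}+1=\RHS(\tree)$. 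Reading off this equation: $\min(\HSv{u1},\HSv{u2})=\lfloor(\RHS(\tree)-1)/2\rfloor$ and the parity of $\RHS(\tree)$ tells us whether $\HSv{u1}>\HSv{u2}$ or not, which is exactly how $\tfree$ and $\tfix$ are defined (even $\RHS$: left child is ``free''). The ``fix'' child is the one achieving the minimum Horton–Strahler number $\lfloor(\RHS(\tree)-1)/2\rfloor$, and since it satisfies $\RHS(\tfix)\ge\HSv{\tfix}\ge\lfloor(\RHS(\tree)-1)/2\rfloor$ while also $\RHS(\tfix)<\RHS(\tree)$; to get equality $\RHS(\tfix)=\lfloor(\RHS(\tree)-1)/2\rfloor$ I use that $\RHS(\tfix)\le\HSv{\tfix}$ is false, so instead: $\tfix$ is the $\preceq_{\mathrm{lex}}$-maximal-consistent choice, and one shows $\RHS(\tfix)\le\lfloor(\RHS(\tree)-1)/2\rfloor$ because otherwise $\lfloor(\RHS(\tree)+1)/2\rfloor\le\RHS(\tfix)$ combined with the free child would push $\RHS_u(\tree)$ above $\RHS(\tree)$ via the recursive formula, a contradiction. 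For \eqref{eq:tfreebd}: the lower bound $\lfloor\RHS(\tree)/2\rfloor\le\RHS(\tfree)$ is forced because $\tfree$ is the child for which the larger of the two ``$\lfloor r/2\rfloor,\lfloor(r-1)/2\rfloor$'' constraints applies when realizing $\tau_{\RHS(\tree)}$ via case (3), and the upper bound $\RHS(\tfree)\le\RHS(\tree)-1$ is just $\tfree\notin P$. I would present \eqref{eq:freeFix} first, then deduce \eqref{eq:tfreebd}, then note \eqref{eq:IneqAlongPath} as the ancestral analogue of the same computation. All of this is short modulo the parity casework flagged above.
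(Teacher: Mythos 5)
Your overall strategy --- reading everything off from the recursion \eqref{eq:recursive_bd} applied along the ancestral path of $u(\tree)$ and at $u(\tree)$ itself --- is the paper's strategy. Your treatment of the set $P$ (non-branching via the contradiction $\RHS(\tree)=2\RHS(\tree)+1$ at a hypothetical branch point) and of \eqref{eq:freeFix}--\eqref{eq:tfreebd} (the maximum at $u$ must be attained by the third term of \eqref{eq:recursive_bd} since both children of $u$ have strictly smaller refined Horton--Strahler number, then read off the resulting equation by parity) is essentially correct, modulo many false starts and some garbled notation.

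However, your proof of \eqref{eq:IneqAlongPath} has a genuine gap. After abandoning several approaches, you settle on the claim that an embedding of $\tau_{\RHS(\tree)}$ into $\theta_w\tree$, for $w=u_1\cdots u_{j-1}$, ``must satisfy case (3) at $w$''. This is false when $w$ is a strict ancestor of $u$: since $\RHS_{wu_j}(\tree)=\RHS(\tree)$, such an embedding can be taken entirely inside $\theta_{wu_j}\tree$, i.e.\ in case (1) or (2), and in general \emph{no} embedding satisfies case (3) at $w$ (e.g.\ when $\tspn{j}$ is a single leaf and $\RHS(\tree)\ge 2$). Worse, even where case (3) does hold it yields the \emph{lower} bounds $\lfloor \RHS(\tree)/2\rfloor\le\HSv{w1}(\tree)$ and $\lfloor(\RHS(\tree)-1)/2\rfloor\le\HSv{w2}(\tree)$, which point in the wrong direction: \eqref{eq:IneqAlongPath} is an \emph{upper} bound on $\RHS(\tspn{j})$, and no amount of the ``parity bookkeeping'' you flag as the main obstacle will extract an upper bound from lower bounds. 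The missing idea is to use that $\HSv{w}(\tree)=\RHS(\tree)$ is the \emph{maximum} of the three terms in \eqref{eq:recursive_bd}, so in particular
\[
2\min\big(\HSv{w1}(\tree),\HSv{w2}(\tree)\big)+\I{\HSv{w1}(\tree)>\HSv{w2}(\tree)}+1\;\le\;\RHS(\tree).
\]
Since the on-spine child satisfies $\HSv{wu_j}(\tree)=\RHS(\tree)\ge\HSv{wv_j}(\tree)$, the minimum equals $\RHS(\tspn{j})$, which gives $2\RHS(\tspn{j})+1\le\RHS(\tree)$ outright (this is \eqref{eq:IneqAlongPath} when $v_j=1$); and when $u_j=1$, so $v_j=2$, this already forces $\RHS(\tspn{j})<\RHS(\tree)=\HSv{w1}(\tree)$, hence the indicator equals $1$ and $2\RHS(\tspn{j})+2\le\RHS(\tree)$. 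Note also that your argument for $P$ being exactly the ancestors of $u$ can be obtained as a byproduct of \eqref{eq:IneqAlongPath} (which shows every off-spine sibling subtree has strictly smaller refined Horton--Strahler number), which is how the paper organizes the proof.
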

\begin{proof}
From the definition of the refined Horton--Strahler number, we have that $\HSv{v}(\tree)\geq \HSv{w}(\tree)$, whenever $v\preceq w$. This implies that the set of vertices $P:=\{v\in \tree : \HSv v(\tree)=\RHS(\tree)\}$ is a connected set of $\tree$ containing the path from $\emptyset$ to $u$. To see that $P$ does not contain any other vertex, we prove~\eqref{eq:IneqAlongPath}, which immediately implies that $P$ cannot contain any vertex which is not an ancestor of $u$.

Fix $1\leq j\leq \ell$, and write $v:=u_1\ldots u_{j-1}$; then \eqref{eq:recursive_bd} implies that 
\[
\RHS(\tree)=\HSv{v}(\tree) \ge 2\min\big(\HSv{v1}(\tree),\HSv{v2}(\tree)\big)+\I{\HSv{v1}(\tree)>\HSv{v2}(\tree)}+1.
\]
If $u_j=2$ then $\RHS_{v2}(\tree) \ge \RHS_{v1}(\tree)=\RHS(\tree_j^{\mathrm{spn}})$ so the above bound yields that 
\[
\RHS(\tree) \ge 2 \RHS(\tree_j^{\mathrm{spn}})+1=2 \RHS(\tree_j^{\mathrm{spn}})+v_j\, .
\]
If $u_j=1$ then $\RHS_{v1}(\tree) \ge \RHS_{v2}(\tree)=\RHS(\tree_j^{\mathrm{spn}})$, 
so $\RHS(\tree) \ge 2 \RHS(\tree_j^{\mathrm{spn}})+\I{\RHS_{v1}(\tree) > \RHS_{v2}(\tree)}+1$. 
This implies that $\RHS(\tree_j^{\mathrm{spn}}) < \RHS(\tree)=\RHS_{v1}(\tree)$, so in this case $\I{\RHS_{v1}(\tree) > \RHS_{v2}(\tree)}=1$ and we obtain 
\[
\RHS(\tree) \ge 2 \RHS(\tree_j^{\mathrm{spn}})+2=2 \RHS(\tree_j^{\mathrm{spn}})+v_j\, .
\]
The two preceding displays together establish \eqref{eq:IneqAlongPath}.

By $\preceq_{\mathrm{lex}}$-maximality of $u$, note that $\RHS(\tfree)\leq \RHS(\tree)-1$. To show (\ref{eq:freeFix}) and to complete the proof of (\ref{eq:tfreebd}), observe that since $\tree \ne \emptyset$, $\RHS(\tree)\neq 0$, so $u$ cannot be a leaf, and $u1,u2\in \tree$. By the definition of $u$, we have that $\HSv{u1}(\tree),\HSv{u2}(\tree)<\HSv{u}(\tree)=\RHS(\tree)$. By \eqref{eq:recursive_bd}, this implies that
\begin{equation}\label{eq:stu_ident}
\RHS(\tree)=2\min\big(\HSv{u1}(\tree),\HSv{u2}(\tree)\big)+\I{\HSv{u1}(\tree)>\HSv{u2}(\tree)}+1,
\end{equation}
so that $\RHS(\tree)$ is even if and only if $\HSv{u1}(\tree)>\HSv{u2}(\tree)$. We conclude with a case analysis. If $\RHS(\tree)$ is even, then $\tfree=\theta_{u1}\tree$ and $\tfix=\theta_{u2}\tree$, so $\RHS(\tfree)\geq 1+\RHS(\tfix)$ and \eqref{eq:stu_ident} becomes $\RHS(\tree)=2\RHS(\tfix)+2$. On the other hand, if $\RHS(\tree)$ is odd, then $\tfree=\theta_{u2}\tree$ and $\tfix=\theta_{u1}\tree$, so $\RHS(\tfix)\leq\RHS(\tfree)$ and \eqref{eq:stu_ident} becomes $\RHS(\tree)=2\RHS(\tfix)+1$. In both cases, we see that $\RHS(\tfix)=\lceil \RHS(\tree)/2\rceil-1$ and $\RHS(\tfree) \ge \lfloor \RHS(\tree)/2\rfloor$, which completes the proof.

\end{proof}



In what follows, for $h \in \Z_{\ge 0}$ we write $\cB_h=\bigcup_{n \ge 0} \cB_{n,h}$ for the set of full binary trees $\tree$ with $\RHS(\tree)=h$.

\begin{lem}\label{lem_tree:one_step_recursion}
For all $h \ge 1$, the function $G$ given by
\[
\tree\stackrel{G}{\longmapsto}\big (\tfix,\tfree,((v_j(\tree),\tree_j^{\mathrm{spn}}) ; 1\leq j\leq \ell(\tree))\big) 
\]
\noindent
is a bijection from $\mathcal{B}_h$ to the set
\[\mathcal{B}_h':=\mathcal{B}_{\halfceil{h}-1}\ \times\ \bigcup_{k=\halffloor{h}}^{h-1}\mathcal{B}_k \ \times \ \bigcup_{\ell\geq 0}\Big(\big(\{1\}\times\bigcup_{k=0}^{\halfceil{h}-1}\mathcal{B}_k\big)\,\cup\, \big(\{2\}\times\bigcup_{k=0}^{\halffloor{h}-1}\mathcal{B}_k\big)\Big)^\ell.\]

\end{lem}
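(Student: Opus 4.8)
The plan is to verify that $G$ is well-defined (i.e.\ lands in $\mathcal{B}_h'$), injective, and surjective, with the bulk of the work being the construction of the inverse map.

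\textbf{Well-definedness.} Let $\tree\in\mathcal{B}_h$ with $h\ge 1$, so $\tree\ne\{\varnothing\}$ and $u=u(\tree)$ is an internal vertex. First, $\RHS(\tfix)=\lceil h/2\rceil-1$ by \eqref{eq:freeFix}, so $\tfix\in\mathcal{B}_{\lceil h/2\rceil-1}$. Next, $\lfloor h/2\rfloor\le\RHS(\tfree)\le h-1$ by \eqref{eq:tfreebd}, so $\tfree\in\bigcup_{k=\lfloor h/2\rfloor}^{h-1}\mathcal{B}_k$. For the spinal trees: by Corollary~\ref{cor:maxRHS}, $\RHS(\tspn{j})\le(h-v_j)/2$, so if $v_j=1$ then $\RHS(\tspn{j})\le(h-1)/2$, i.e.\ $\RHS(\tspn{j})\le\lceil h/2\rceil-1$ (using integrality), and if $v_j=2$ then $\RHS(\tspn{j})\le(h-2)/2=\lfloor h/2\rfloor-1$; note these are genuine upper bounds with no lower constraint, matching the ranges $\bigcup_{k=0}^{\lceil h/2\rceil-1}$ and $\bigcup_{k=0}^{\lfloor h/2\rfloor-1}$ in the definition of $\mathcal{B}_h'$. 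Finally $\ell=\ell(\tree)\ge 0$ is arbitrary, so $G(\tree)\in\mathcal{B}_h'$.

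\textbf{Reconstruction / injectivity and surjectivity.} Given a target tuple $\big(t^{\mathrm{fix}},t^{\mathrm{free}},((v_j,t_j);1\le j\le\ell)\big)\in\mathcal{B}_h'$, I will reconstruct $\tree$ explicitly. Set $u=u_1\ldots u_\ell$ where $u_j=3-v_j$. Build the ``spine'' consisting of the ancestral path $\emptyset,u_1,u_1u_2,\dots,u$, attach $t_j$ at $u_1\ldots u_{j-1}v_j$ for each $j$, and at $u$ attach $t^{\mathrm{fix}}$ and $t^{\mathrm{free}}$ as the two subtrees of $u$, with $t^{\mathrm{free}}$ at $u1$ and $t^{\mathrm{fix}}$ at $u2$ if $h$ is even, and the reverse if $h$ is odd. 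Call the resulting full binary tree $\tree$. I must check: (i) $\RHS(\tree)=h$; (ii) $u(\tree)=u$, i.e.\ $u$ really is the $\preceq_{\mathrm{lex}}$-maximal vertex achieving the refined Horton--Strahler number. For (i): at vertex $u$ we have subtree refined numbers $\lceil h/2\rceil-1$ and some $k\in[\lfloor h/2\rfloor,h-1]$; one computes the Horton--Strahler numbers $\HSv{u1},\HSv{u2}$ — here one uses that $\RHS\le\HS\le$ something, more precisely that $\RHS(s)=h'$ forces $\HS$ to lie in $\ints{\lfloor\log_2(1+h')\rfloor,\cdot}$, but the cleaner route is to apply \eqref{eq:recursive_bd} directly: since $t^{\mathrm{fix}}$ has refined number $\lceil h/2\rceil-1=\lfloor(h-1)/2\rfloor$ and $t^{\mathrm{free}}$ has refined number $\ge\lfloor h/2\rfloor>\lfloor(h-1)/2\rfloor$, the min of the two Horton--Strahler numbers equals $\HS(t^{\mathrm{fix}})$, and a short argument pins $\RHS_u(\tree)=h$ via \eqref{eq:stu_ident}. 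Then, walking up the spine, \eqref{eq:IneqAlongPath}-type computations (in the reverse direction) using $\RHS(t_j)\le(h-v_j)/2$ show $\HSv{u_1\ldots u_{j-1}}(\tree)=h$ for all $j$, with strict inequality on the side-subtree branch, so $P=\{$ancestors of $u\}$ and $u$ is $\preceq_{\mathrm{lex}}$-maximal in $P$. This shows $G$ is surjective, and since the reconstruction is forced at every step (the spine is determined by $P$ and the $v_j$'s, the decomposition at $u$ is determined by the parity of $h$), it is also injective; hence $G$ is a bijection.

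\textbf{Main obstacle.} The delicate point is (ii): proving that the reconstructed $u$ is exactly $u(\tree)$, i.e.\ that no vertex strictly below $u$, and no vertex off the spine, nor any vertex which is $\preceq_{\mathrm{lex}}$-later than $u$ among ancestors, achieves the refined number $h$. Off-spine is handled by the strict inequality $\RHS(t_j)<h$. Below $u$ is handled because both children of $u$ have refined number $<h$ (as $\lceil h/2\rceil-1<h$ and $h-1<h$) and refined number is monotone non-increasing down the tree. For $\preceq_{\mathrm{lex}}$-maximality among the ancestors of $u$: at each ancestor $w=u_1\ldots u_{j-1}$ with $u_j=1$ (so $v_j=2$), we need the branch $w2=w v_j$ to have strictly smaller refined number than $w$; this is exactly $\RHS(t_j)\le\lfloor h/2\rfloor-1<h$. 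The case $u_j=2$ ($v_j=1$) is automatic since then $w2$ is an ancestor of $u$. The remaining subtlety is checking that at $u$ itself, with the free/fix convention as defined, $u$ and not $u1$ or $u2$ is the maximizing vertex, which follows since $\RHS_{u1}(\tree),\RHS_{u2}(\tree)<h=\RHS_u(\tree)$. I expect all of these to be short once the recursive identity \eqref{eq:recursive_bd} and Corollary~\ref{cor:maxRHS} are invoked; the only real care needed is bookkeeping the even/odd cases consistently with the definitions of $\tfree$ and $\tfix$.
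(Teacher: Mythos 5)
Your proposal is correct and follows essentially the same route as the paper: check well-definedness via Corollary~\ref{cor:maxRHS}, construct the explicit inverse by rebuilding the spine with $u_j=3-v_j$ and the parity convention for $\tfree/\tfix$, and verify that the reconstructed tree has refined Horton--Strahler number $h$ with $u$ the $\preceq_{\mathrm{lex}}$-maximal maximizing vertex by applying \eqref{eq:recursive_bd} at $u$ and then inducting up the spine. (Your brief detour through ordinary Horton--Strahler numbers and $\lfloor\log_2(1+h')\rfloor$ is a red herring --- recall $\HSv{v}$ denotes the \emph{refined} number of $\theta_v\tree$ here --- but you correctly discard it in favour of the direct application of \eqref{eq:recursive_bd}.)
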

\begin{proof}
Let $\tree\in\cB_h$ with $h\geq 1$. We first show that $G(\tree)\in\cB_h'$.  Corollary~\ref{cor:maxRHS}
implies that $\tree^{\mathrm{fix}} \in \cB_{\lceil \frac h2\rceil-1}$. Moreover, noting that $\lfloor (h-1)/2\rfloor=\lceil h/2\rceil-1$ and $\lfloor (h-2)/2\rfloor=\lfloor h/2 \rfloor-1$, 
the corollary also implies that 
\[
(v_j(\tree),\tree_j^{\mathrm{spn}}) \in 
\big(\{1\}\times\bigcup_{k=0}^{\halfceil{h}-1}\mathcal{B}_k\big)\,\cup\, \big(\{2\}\times\bigcup_{k=0}^{\halffloor{h}-1}\mathcal{B}_k\big)\, ,
\]
for $1 \le j \le \ell(\tree)$.
Finally, \eqref{eq:tfreebd} implies that $\RHS(\tfree)\in\{\halffloor{h},\ldots,h-1\}$, and thus $G(\tree)\in \cB_h'$. 

We will now show that $G$ is a bijection. Given $\big (\mathtt{t}^{\mathrm{fix}},\mathtt{t}^{\mathrm{free}},((\mathtt{v}_j,\mathtt{t}_j^{\mathrm{spn}}) ; 1\leq j\leq l)\big) 
 \in \mathcal{B}_h'$, we set $\mathtt{u}_j=3-\mathtt{v}_j$ for all $1\leq j\leq l$. Moreover, we set $\mathtt{u}=\mathtt{u}_1\ldots\mathtt{u}_l$ and then set 
\[
\mathtt{u}^{\mathrm{free}} =
\begin{cases}
    \mathtt{u}1 &\text{ if }h \text{ is even},\\
    \mathtt{u}2 &\text{ if }h \text{ is odd}
\end{cases}
\qquad 
\text{ and }
\qquad 
\mathtt{u}^{\mathrm{fix}} =
\begin{cases}
    \mathtt{u}2 &\text{ if }h \text{ is even},\\
    \mathtt{u}1 &\text{ if }h \text{ is odd.}
\end{cases}
\]
Also, we set
\begin{equation}
\label{inverse_G}
\mathtt{t}:=\{(\mathtt{u}_1,\ldots,\mathtt{u}_j) : 0\leq j\leq l\}\ \cup\  \mathtt{u}^{\mathrm{free}}*\mathtt{t}^{\mathrm{free}}\ \cup \ \mathtt{u}^{\mathrm{fix}}*\mathtt{t}^{\mathrm{fix}}\ \cup\  \bigcup_{j=1}^l (\mathtt{u}_1,\ldots,\mathtt{u}_{j-1},\mathtt{v}_j)*\mathtt{t}_j^{\mathrm{spn}}\, ,
\end{equation}
where we have written, e.g., $(\mathtt{u}_1,\ldots,\mathtt{u}_j)=\mathtt{u}_1\ldots \mathtt{u}_j$ for clarity. The expression (\ref{inverse_G}) defines a map $\widetilde{G}$ from $\mathcal{B}_h'$ to the set of full binary trees. We observe that, by construction, $\widetilde{G}\circ G(\tree)=\tree$ for any tree $\tree\in\mathcal{B}_h$. To complete the proof that $\widetilde{G}$ is the inverse of $G$, we only need to show that $\RHS(\mathtt{t})=h$, and that $\mathtt{u}$ is the $\preceq_{\mathrm{lex}}$-maximal vertex of $\mathtt{t}$ such that $\HSv{\mathtt{u}}(\mathtt{t})=h$, since this readily yields that $G\circ\widetilde{G}$ is the identity on $\mathcal{B}_h'$.

First, it follows from Proposition~\ref{prop:recursive_RHS} and the definitions of $\mathtt{u}^{\mathrm{free}}$ and $\mathtt{u}^{\mathrm{fix}}$ and of $\cB'_h$ , that $\HSv{\mathtt{u}}(\mathtt{t})=h$, and that $\HSv{\mathtt{u}1}(\mathtt{t}),\HSv{\mathtt{u}2}(\mathtt{t})<h$. Then, again using Proposition~\ref{prop:recursive_RHS}, it follows by decreasing induction on $k$ that $\HSv{\mathtt{u}_1\ldots \mathtt{u}_k}(\mathtt{t})=h$, for any $k\in \{0,\ldots,l\}$. This implies in particular that $\RHS(\mathtt{t})=h$. Finally, the fact that $\mathtt{u}$ is the lexicographically maximal vertex with $\RHS_{\mathtt{u}}(\mathtt{t})=h$ is immediate from the facts that $\RHS(\mathtt{t}_j^{\mathrm{spn}})<h$ for all $1 \le j \le l$ and that $\HSv{\mathtt{u}1}(\mathtt{t}),\HSv{\mathtt{u}2}(\mathtt{t})<h$. 
\end{proof}

\section{The bijection}\label{sec:bij0}

\begin{figure}[hbt]
    \centering
    \includegraphics[width=\textwidth]{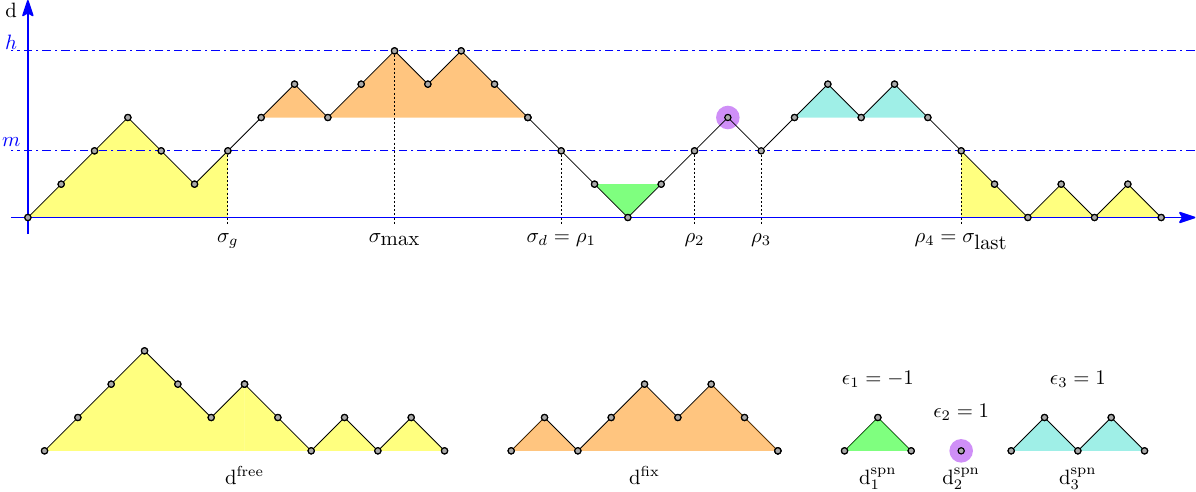}
    \caption{
    \emph{Top:} 
    A Dyck path $\mathrm{d}$, with $h=\|\path\|=5$, 
    so that $m=2$. The quantities $\sigma_g$, $\sigma_{\max}$, $\sigma_d$, 
     $\sigma_{\mathrm{last}}$, and $(\rho_i)$ are represented.\\
     \emph{Bottom:} 
     The paths $\path^{\mathrm{free}}$, $\path^{\mathrm{fix}}$ and $(\path^{\mathrm{spn}}_i)_{1\leq i \leq \ell}$ 
     associated to $\path$, together with the corresponding values of $(\epsilon_i)_{1\leq i \leq \ell}$.
     }
    \label{fig:sigmas}
\end{figure}

Write $\mathcal{D}_h=\bigcup_{n \ge 0} \cD_{n,h}$ for the set of all Dyck paths with height  $h$, and let
 $\cD=\bigcup_{n \ge 0} \cD(n)=\bigcup_{h \ge 0} \cD_h$ be the set of all Dyck paths. Likewise, let $\cB=\bigcup_{n \ge 0} \cB(n)=\bigcup_{h \ge 0} \cB_h$ be the set of all full binary trees. 
In this section we define a function $\Phi:\cD \to \cB$ and show that it induces a bijection between $\cD_{n,h}$ and $\cB_{n,h}$ for all $n,h \ge 0$, thereby proving Theorem~\ref{thm:main}. 

The definitions of the coming two paragraphs are depicted in Figure~\ref{fig:sigmas}. 
First, for $\path \in \cD_{n,h}$ with $h\geq 1$ write  $m=m(\mathrm{d})=\lfloor h/2\rfloor$, and let 
\begin{align*}
    \sigma_{\max} & =\sigma_{\max}(\mathrm{d})=\inf\{i \in \ints{0,2n}: \mathrm{d}(i)=h\}\, ,\\
    \sigma_g&=\sigma_g(\mathrm{d})=\sup\{i \in \ints{0,\sigma_{\max}}: \mathrm{d}(i)=m\}\, ,\\
    \sigma_d &=\sigma_d(\mathrm{d})=\inf\{i \in \ints{\sigma_{\max},2n} : \mathrm{d}(i)=m\}~\mbox{and}\\
    \sigma_{\mathrm{last}}
    & = \sigma_{\mathrm{last}}(\mathrm{d}) 
    = \sup\{i \in \ints{\sigma_{\max},2n}: \mathrm{d}(i)=m\}\, .
\end{align*}
Then list the elements of the set $\{i \in \ints{\sigma_d,\sigma_{\mathrm{last}}}:\mathrm{d}(i)=m\}$ in increasing order as 
\[
\sigma_d=\rho_1 < \ldots < \rho_\ell  < \rho_{\ell+1}=\sigma_{\mathrm{last}}\, ,
\]
so $\ell=\ell(\mathrm{d})=|\{i \in \ints{\sigma_d,\sigma_{\mathrm{last}}}:\mathrm{d}(i)=m\}|-1$. Note that if $\sigma_d=\sigma_{\mathrm{last}}$ then $\ell=0$ and $\rho_1=\sigma_{\mathrm{last}}$.

Next, still for $\path \in \cB_{n,h}$, define Dyck paths $\path^{\mathrm{fix}}$ of length $\sigma_d-\sigma_g-2$, $\path^{\mathrm{free}}$ of length $2n-(\sigma_{\mathrm{last}}-\sigma_g)$, and $\path_j^{\mathrm{spn}}$ of length $\rho_{j+1}-\rho_j-2$ for $j \in \ints{1,\ell}$ as follows:
\begin{align*}
\path^{\mathrm{fix}}(i)&=\path(\sigma_g+1+i)-m-1\text{ for }i \in \ints{0,\sigma_d-\sigma_g-2};\\
\path^{\mathrm{free}}(i)&= \begin{cases}
	\path(i)&\text{ for }i\in\ints{0,\sigma_g},\\
	\path(\sigma_{\mathrm{last}}+i-\sigma_g)&\text{ for }i\in \ints{\sigma_{g}+1,2n-(\sigma_{\mathrm{last}}-\sigma_g)}
\end{cases};\text{ and }\\
 \path_j^{\mathrm{spn}}(i)&=|\path(\rho_j+1+i)-m|-1
 \text{ for }i \in \ints{0,\rho_{j+1}-\rho_j-2}.
 \end{align*}
Setting
\[
\epsilon_j=\epsilon_j(\path)=\path(\rho_j+1)-\path(\rho_j)\, ,
\]
for $j \in \ints{1,\ell}$, then we may also write 
$\path_j^{\mathrm{spn}}(i)=\epsilon_j(\path(\rho_j+1+i)-m)-1$.  Finally, write $u_j=(3+\epsilon_j)/2$ and $v_j=(3-\epsilon_j)/2$, so that if $\epsilon_j=-1$ then $u_j=1$ and $v_j=2$, whereas if $\epsilon_j=1$ then $u_j=2$ and $v_j=1$.

We immediately record a few facts about the Dyck paths we just defined, as they will be useful in the sequel.  
\begin{lem}\label{lem:path_obvs}
If $\path$ is a Dyck path with $\|\path\|\ge 1$, then for all $j \in \ints{1,\ell}$, it holds that
\begin{equation*}
\|\path_j^{\mathrm{spn}}\|\le \frac{\|\path\| -v_j}{2}.
\end{equation*}
Moreover,
\begin{equation*}
    \|\path^{\mathrm{fix}}\|=\lceil\,  \|\path\|/2\, \rceil -1 =\lfloor (\|\path\|-1)/2\rfloor
\end{equation*}
and
\[
\lfloor \|\path\|/2\rfloor \leq \|\path^{\mathrm{free}}\|\leq \|\path\|-1\, .
\]
\end{lem}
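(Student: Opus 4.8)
The statement collects three height identities/inequalities about the paths $\path^{\mathrm{fix}}$, $\path^{\mathrm{free}}$ and $\path_j^{\mathrm{spn}}$ extracted from a Dyck path $\path$ with $h:=\|\path\|\ge 1$ and $m:=\lfloor h/2\rfloor$. Each of these is a direct consequence of unwinding the definitions together with a couple of elementary observations about where $\path$ attains the values $m$ and $h$; I would prove them one at a time.

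\textbf{Step 1: the identity for $\path^{\mathrm{fix}}$.} By definition $\path^{\mathrm{fix}}(i)=\path(\sigma_g+1+i)-m-1$ on $\ints{0,\sigma_d-\sigma_g-2}$, so $\|\path^{\mathrm{fix}}\|=\max\{\path(i):\sigma_g+1\le i\le \sigma_d-1\}-m-1$. On the interval $\ints{\sigma_g,\sigma_d}$ the path equals $m$ exactly at the two endpoints (by the definitions of $\sigma_g$ as the last hitting of $m$ before $\sigma_{\max}$ and $\sigma_d$ as the first hitting of $m$ after $\sigma_{\max}$, and since $\sigma_g<\sigma_{\max}<\sigma_d$), hence on the open interval the path stays $>m$; in particular $\path(\sigma_g+1)=\path(\sigma_d-1)=m+1$, so $\path^{\mathrm{fix}}$ is indeed a Dyck path. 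Since $\sigma_{\max}\in(\sigma_g,\sigma_d)$ and $\path(\sigma_{\max})=h$, the maximum of $\path$ on this sub-block is exactly $h$, giving $\|\path^{\mathrm{fix}}\|=h-m-1=\lceil h/2\rceil-1$, using $h-\lfloor h/2\rfloor=\lceil h/2\rceil$. The rewriting $\lceil h/2\rceil-1=\lfloor(h-1)/2\rfloor$ is elementary.

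\textbf{Step 2: the two-sided bound for $\path^{\mathrm{free}}$.} The path $\path^{\mathrm{free}}$ is obtained by excising the block $\ints{\sigma_g,\sigma_{\mathrm{last}}}$ and gluing; since $\path(\sigma_g)=\path(\sigma_{\mathrm{last}})=m$ this is a valid Dyck path. Its height is the max of $\path$ over $\ints{0,\sigma_g}\cup\ints{\sigma_{\mathrm{last}},2n}$. For the lower bound, note that $\sigma_g$ was defined as a hitting time of level $m$ (so $\path$ reaches $m=\lfloor h/2\rfloor$ on $\ints{0,\sigma_g}$), giving $\|\path^{\mathrm{free}}\|\ge m=\lfloor h/2\rfloor$. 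For the upper bound, the block $\ints{\sigma_g,\sigma_{\mathrm{last}}}$ contains $\sigma_{\max}$, the first time $\path$ reaches its global max $h$; I claim $\path$ does not reach $h$ anywhere on $\ints{\sigma_{\mathrm{last}},2n}$ either — actually the cleanest argument is simply that $\path^{\mathrm{free}}$ is a Dyck path strictly shorter than $\path$ that is missing the (unique-by-minimality, or at least present) excursion above level $m$ containing the first maximum, so at minimum we must rule out $\|\path^{\mathrm{free}}\|=h$: if $\path^{\mathrm{free}}$ reached $h$, it would do so either on $\ints{0,\sigma_g}$, contradicting that $\sigma_{\max}$ is the \emph{first} time $\path$ hits $h$ and $\sigma_g<\sigma_{\max}$, or on the part coming from $\ints{\sigma_{\mathrm{last}},2n}$, i.e. $\path$ hits $h$ at some $i>\sigma_{\mathrm{last}}$; but then, since $\path(\sigma_{\mathrm{last}})=m<h$, continuity of the path forces $\path$ to hit level $m$ at some point in $(\sigma_{\mathrm{last}},i)$... which does \emph{not} immediately contradict the definition of $\sigma_{\mathrm{last}}$ as the \emph{last} hitting of $m$ after $\sigma_{\max}$ — wait, it does: $\sigma_{\mathrm{last}}$ is the supremum of $\{i\ge\sigma_{\max}:\path(i)=m\}$, so no such later hitting exists. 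Hence $\path$ stays $>m$ on $(\sigma_{\mathrm{last}},2n]$, but $\path(2n)=0<m$ since $h\ge1$ forces $m\ge0$ and actually we need $m\ge1$ here; if $m=0$ (i.e. $h=1$) the claimed bounds read $0\le\|\path^{\mathrm{free}}\|\le0$, and indeed $\sigma_{\mathrm{last}}=2n$, $\sigma_g=0$, so $\path^{\mathrm{free}}$ is the empty path — fine. For $m\ge1$: $\path>m$ on $(\sigma_{\mathrm{last}},2n]$ contradicts $\path(2n)=0$, so $\sigma_{\mathrm{last}}=2n$ and there is nothing to the right; thus $\|\path^{\mathrm{free}}\|=\max\{\path(i):0\le i\le\sigma_g\}<h$, i.e. $\le h-1$.

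\textbf{Step 3: the bound for $\path_j^{\mathrm{spn}}$.} On $\ints{\rho_j,\rho_{j+1}}$ the path $\path$ starts and ends at $m$ and, by the definition of the $\rho$'s as \emph{consecutive} hitting times of level $m$ in $\ints{\sigma_d,\sigma_{\mathrm{last}}}$, it does not hit $m$ strictly in between — so on the open interval it is entirely $>m$ or entirely $<m$, the sign being $\epsilon_j$. Then $\path_j^{\mathrm{spn}}(i)=\epsilon_j(\path(\rho_j+1+i)-m)-1=|\path(\rho_j+1+i)-m|-1$ is a nonnegative function vanishing at both endpoints with $\pm1$ steps, i.e. a Dyck path, and $\|\path_j^{\mathrm{spn}}\|=\max_i|\path(\rho_j+1+i)-m|-1$. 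Since the block lies inside $\ints{\sigma_d,\sigma_{\mathrm{last}}}\subset\ints{\sigma_{\max},2n}$, on which $0\le\path\le h$, we get $|\path(\cdot)-m|\le\max(m,h-m)=h-m=\lceil h/2\rceil$ when $\epsilon_j=+1$, and... hmm, this only gives $\|\path_j^{\mathrm{spn}}\|\le\lceil h/2\rceil-1$, whereas the claim when $v_j=1$ (i.e. $\epsilon_j=1$, $u_j=2$) is exactly $\le(h-1)/2=\lfloor h/2\rfloor$ if $h$ odd and we need $\le(h-1)/2$ which for $h$ even is non-integer so really $\le\lfloor(h-1)/2\rfloor=\lceil h/2\rceil-1$ — consistent. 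When $\epsilon_j=-1$ ($v_j=2$) the path on the block is $<m$, so $|\path-m|\le m=\lfloor h/2\rfloor$, giving $\|\path_j^{\mathrm{spn}}\|\le m-1=\lfloor h/2\rfloor-1=(h-2)/2\le(h-v_j)/2$. So I would do the two sign cases separately, reading off $\max|\path-m|$ from whether the block sits above or below level $m$, and in each case bound it by $h-m$ resp.\ $m$, then subtract $1$; the arithmetic $(h-v_j)/2$ packages both cases since $v_j\in\{1,2\}$ and $h-m=\lceil h/2\rceil$, $m=\lfloor h/2\rfloor$.

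\textbf{Main obstacle.} There is no deep obstacle; the content is entirely in correctly reading off, from the definitions of $\sigma_g,\sigma_d,\sigma_{\max},\sigma_{\mathrm{last}}$ and the $\rho_j$, that (i) each excised block starts and ends at the right level so the glued/shifted objects are genuine Dyck paths, (ii) the blocks defining $\path^{\mathrm{fix}}$ and $\path_j^{\mathrm{spn}}$ do not return to level $m$ internally (so the shifted path is nonnegative), and (iii) the global maximum $h$ is attained inside the $\path^{\mathrm{fix}}$-block and \emph{not} in the $\path^{\mathrm{free}}$-block (which is what gives $\|\path^{\mathrm{free}}\|\le h-1$). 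The mildly fiddly point, worth stating carefully, is the degenerate case $h=1$ (equivalently $m=0$), where several of the paths are empty and one should check the displayed inequalities still hold trivially; I would dispatch it with a one-line remark and otherwise assume $h\ge2$, i.e. $m\ge1$, in the bound for $\|\path^{\mathrm{free}}\|$.
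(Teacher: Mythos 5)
Your argument is correct and follows essentially the same route as the paper: read off $\|\path^{\mathrm{fix}}\|=h-m-1$ from the fact that $\sigma_{\max}$ lies in the fix-block and $\path\le h$ everywhere, bound $\|\path_j^{\mathrm{spn}}\|$ by a two-case analysis on the sign $\epsilon_j$ (block above level $m$ gives $\le h-m-1$, block below gives $\le m-1$), and sandwich $\|\path^{\mathrm{free}}\|$ between $m$ and $h-1$. One correction to your Step 2: the intermediate conclusion ``so $\sigma_{\mathrm{last}}=2n$ and there is nothing to the right'' is false in general --- for the path $0,1,2,3,4,3,2,1,0,1,0$ one has $m=2$ and $\sigma_{\mathrm{last}}=6<10=2n$ --- and the only thing your contradiction argument actually yields is that $\path$ never reaches $h$ after $\sigma_{\mathrm{last}}$. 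The clean statement, which is what the paper uses, is that for $m\ge 1$ one has $\path(k)<m$ for all $k>\sigma_{\mathrm{last}}$ (the walk cannot revisit level $m$ yet must end at $0$), so the maximum of $\path^{\mathrm{free}}$ is attained on the part coming from $\ints{0,\sigma_g}$, where $\path$ reaches $m$ but not $h$; this gives both bounds at once and also correctly justifies the equality $\|\path^{\mathrm{free}}\|=\max\{\path(i):0\le i\le\sigma_g\}$ that you asserted on the basis of the false claim.
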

\begin{proof}
    First, $\path^{\mathrm{fix}}(\sigma_{\max}-\sigma_g-1)=\path(\sigma_{\max})-m-1=h-m-1$, and $\path^{\mathrm{fix}}(i) \le \|\path\|-m-1=h-m-1$ for all $i$, so $\|\path^{\mathrm{fix}}\| =h-m -1$. Next, since $\path(\sigma_g)=m$ and $\path(k)< m$ for $k >\sigma_{\mathrm{last}}$, we have 
\[
\max( \path^{\mathrm{free}}(i),i\in \ints{0,2n-(\sigma_{\mathrm{last}}-\sigma_g)})=\max(\path(i),i \in \ints{0,\sigma_g}) \in \ints{m,h-1}\, ,
\]
so $m\leq \|\path^{\mathrm{free}}\|\leq h-1$. Finally, if $\epsilon_j=1$ then $m+1 \le \path(\rho_j+1+i) \le h$ for all $i \in \ints{0,\rho_{j+1}-\rho_j-2}$, so $\|\path_j^{\mathrm{spn}}\|\le h-(m+1)$; and likewise, if $\epsilon_j=-1$ then  $0 \le \path(\rho_j+1+i) \le m-1$ for all $i \in \ints{0,\rho_{j+1}-\rho_j-2}$, so $\|\path_j^{\mathrm{spn}}\|\le m-1$. Recalling that $\|\path\|=h$ and noting that $m=\lfloor h/2\rfloor$ and $h-m-1=\lceil h/2\rceil-1=\lfloor (h-1)/2\rfloor$ conclude the proof.
\end{proof}
We are now ready to describe the bijection $\Phi:\cD \to \cB$, which is illustrated in Figure~\ref{fig:bijection}. For $\path$ the unique Dyck path of length $0$, we let $\Phi(\path)=\{\emptyset\}=\tau_0$. 
For other elements of $\cD$, the bijection is recursively constructed, and may be informally described as follows. 
First, $\Phi(\path)$ contains the spine $\{u_1\ldots u_j : 0\leq j\leq \ell\}$; by convention $u_1\ldots u_j=\emptyset$ when $j=0$. Next, at the end of the spine, the tree $\Phi(\path^{\mathrm{free}})$ is rooted on the left if $h$ is even, and on the right otherwise; the tree $\Phi(\path^{\mathrm{fix}})$ is rooted on the right if $h$ is even, and on the left otherwise. Finally, for each $j \in \ints{1,\ell}$, the tree $\Phi(\path_j^{\mathrm{spn}})$ is rooted at a child of $u_1\ldots u_{j-1}$: on the left if $\epsilon_j=1$, and on the right if $\epsilon_j=-1$.

Formally, if $h=h(\path)$ is even then let 
\begin{multline*}
\Phi(\path)=\{u_1 \ldots u_j : 0\leq j\leq \ell\}\ \cup\  (u_1,\ldots,u_\ell,1)*\Phi(\path^{\mathrm{free}})\\
\cup \ (u_1,\ldots,u_\ell,2)*\Phi(\path^{\mathrm{fix}})\ \cup\  \bigcup_{j=1}^\ell (u_1,\ldots,u_{j-1},v_j)*\Phi(\path_j^{\mathrm{spn}})\, ,
\end{multline*}
and if $h$ is odd then let 
\begin{multline*}
\Phi(\path)=\{ u_1 \ldots u_j : 0\leq j\leq \ell\}\ \cup \ (u_1,\ldots,u_\ell,1)*\Phi(\path^{\mathrm{fix}})\\
\cup\ \ (u_1,\ldots,u_\ell,2)*\Phi(\path^{\mathrm{free}}) \ \cup\   \bigcup_{j=1}^\ell (u_1,\ldots,u_{j-1},v_j)*\Phi(\path_j^{\mathrm{spn}})\, ;
\end{multline*}
in the last two displays we have written $(u_1,\ldots,u_\ell,1)=u_1\ldots u_\ell 1$, and the like, to make the expressions easier to parse. 
Note that if $\path \in \cD(n)$ for $n > 0$ then the Dyck paths $\path^{\mathrm{fix}},\path^{\mathrm{free}}$, and $(\path^{\mathrm{spn}}_j, j \in \ints{1,\ell})$ all belong to $\bigcup_{m \in \ints{0,n-1}} \cD(m)$, so the above identities indeed recursively define $\Phi(\path)$ for all $\path \in \cD$.

\begin{figure}[t]
    \centering
    \includegraphics[width=\textwidth,page=2]{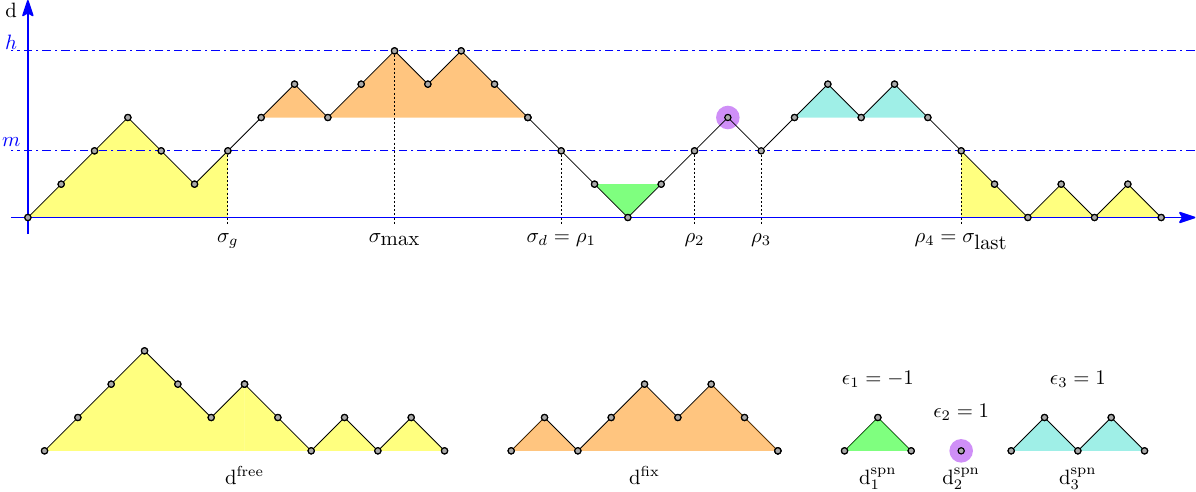}
    \caption{Construction of $\Phi(\path)$, for the path $\path$ of Figure~\ref{fig:sigmas}.
     }
    \label{fig:bijection}
\end{figure}

The following theorem clearly implies Theorem~\ref{thm:main}. 
\begin{thm}\label{thm:main0}
    For all $n,h \in \Z_{\ge 0}$, the function  $\Phi|_{\cD_{n,h}}$ is a bijection from $\cD_{n,h}$ to $\cB_{n,h}$. 
\end{thm}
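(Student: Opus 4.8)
The strategy is to prove Theorem~\ref{thm:main0} by strong induction on $n$, using the recursive structures on both sides established in Lemma~\ref{lem_tree:one_step_recursion} (for trees) and Lemma~\ref{lem:path_obvs} together with the path decomposition (for Dyck paths). The base case $n=0$ is trivial: $\cD_{0,0}=\{\text{empty path}\}$, $\cB_{0,0}=\{\{\emptyset\}\}=\{\tau_0\}$, and $\Phi$ maps one to the other; and $\cD_{0,h}=\cB_{0,h}=\emptyset$ for $h\ge 1$. For the inductive step, fix $n\ge 1$ and $h$; if $h=0$ then $\cD_{n,0}=\cB_{n,0}=\emptyset$ since a nonempty Dyck path has positive height and a full binary tree with $n\ge 1$ internal vertices has $\RHS\ge 1$, so assume $h\ge 1$.

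\textbf{Key steps.} First, I would define a path-side decomposition map $F$ analogous to $G$ from Lemma~\ref{lem_tree:one_step_recursion}, sending $\path\in\cD_{n,h}$ to $\big(\path^{\mathrm{fix}},\path^{\mathrm{free}},((v_j(\path),\path_j^{\mathrm{spn}});1\le j\le\ell(\path))\big)$, and check — using Lemma~\ref{lem:path_obvs} and the length bookkeeping already recorded (lengths $\sigma_d-\sigma_g-2$, $2n-(\sigma_{\mathrm{last}}-\sigma_g)$, $\rho_{j+1}-\rho_j-2$, which sum correctly to account for all $2n$ steps) — that $F$ is a bijection from $\cD_{n,h}$ onto
\[
\bigcup_{\substack{n'+n''+\sum_j n_j = n-\ell-1}}\cD_{n',\halfceil{h}-1}\times\Big(\bigcup_{k=\halffloor{h}}^{h-1}\cD_{n'',k}\Big)\times\bigcup_{\ell\ge 0}\prod_{j=1}^\ell\Big(\big(\{1\}\times\bigcup_{k=0}^{\halfceil{h}-1}\cD_{n_j,k}\big)\cup\big(\{2\}\times\bigcup_{k=0}^{\halffloor{h}-1}\cD_{n_j,k}\big)\Big),
\]
the size-refined version of $\cB_h'$. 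The inverse of $F$ reconstructs $\path$ by concatenating: descend $m$ steps from the $\sigma_g$-position to start the $\path^{\mathrm{fix}}$ block shifted up by $m+1$, then the spine excursions $\path_j^{\mathrm{spn}}$ reinserted at height $m$ with sign $\epsilon_j$, then $\path^{\mathrm{free}}$ glued in at $\sigma_g$; one must verify this recovers exactly $\sigma_{\max},\sigma_g,\sigma_d,\sigma_{\mathrm{last}},(\rho_i)$, which follows because $\path^{\mathrm{fix}}$ attains height $\halfceil{h}-1$ (forcing the reconstructed path to reach $h$) while all $\path_j^{\mathrm{spn}}$ have height $<\halfceil{h}-1$ after the shift, and $\path^{\mathrm{free}}$ never returns to height $m$ after $\sigma_{\mathrm{last}}$ — this is the path-side analogue of the verification in Lemma~\ref{lem_tree:one_step_recursion}.

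\textbf{Concluding the induction.} The definition of $\Phi$ is precisely arranged so that the diagram
\[
\cD_{n,h}\ \xrightarrow{\ F\ }\ \cD_h'\ \xrightarrow{\ \widetilde G\circ(\Phi\times\Phi\times\mathrm{id})\ }\ \cB_h
\]
commutes with $\Phi$, i.e.\ $\Phi = \widetilde{G}\circ(\Phi\times\Phi\times(\mathrm{id}\times\Phi)^{\ell})\circ F$ on $\cD_{n,h}$, where $\widetilde{G}=G^{-1}$ from Lemma~\ref{lem_tree:one_step_recursion}: comparing the explicit formula \eqref{inverse_G} for $\widetilde G$ with the displayed formulas for $\Phi(\path)$ (even and odd cases), the roles of $\mathtt{t}^{\mathrm{fix}},\mathtt{t}^{\mathrm{free}},\mathtt{v}_j,\mathtt{t}_j^{\mathrm{spn}}$ match those of $\Phi(\path^{\mathrm{fix}}),\Phi(\path^{\mathrm{free}}),v_j,\Phi(\path_j^{\mathrm{spn}})$ exactly, with $\mathtt{u}^{\mathrm{free}},\mathtt{u}^{\mathrm{fix}}$ matching the left/right choices governed by the parity of $h$. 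By the inductive hypothesis, each factor $\Phi$ appearing on the right is a bijection $\cD_{n',k}\to\cB_{n',k}$ for the relevant smaller $n'<n$ and for $k$ in the required ranges ($\halfceil h-1<h$, $k\le h-1<h$, $k\le\halfceil h-1<h$, etc.), and the number of internal vertices is additive in exactly the way path lengths are (each internal vertex of $\mathtt t$ corresponds to a step-pair, and the $\ell+1$ "extra" internal vertices on the spine — one per spine vertex $\mathtt u_1\ldots\mathtt u_{j-1}$ plus $\mathtt u$ itself — match the $\ell+1$ valleys at height $m$ consumed in the reconstruction). Hence $\widetilde G\circ(\Phi\times\Phi\times(\mathrm{id}\times\Phi)^\ell)\circ F$ is a composition of bijections $\cD_{n,h}\to\cB_{n,h}$, and equals $\Phi|_{\cD_{n,h}}$, completing the induction.

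\textbf{Main obstacle.} The routine-looking but genuinely delicate point is the bookkeeping that the decomposition is \emph{measure-preserving on sizes}: one must check that $\mathrm{length}(\path)/2 = n$ decomposes as the sum of $n'=\mathrm{length}(\path^{\mathrm{fix}})/2$, $n''=\mathrm{length}(\path^{\mathrm{free}})/2$, the $n_j=\mathrm{length}(\path_j^{\mathrm{spn}})/2$, and exactly $\ell+1$ — and simultaneously that the tree $\mathtt t=\widetilde G(\dots)$ has exactly $n$ internal vertices, namely $n'+n''+\sum_j n_j$ from the glued subtrees plus $\ell+1$ from the spine (the vertices $\emptyset=\mathtt u_1\ldots\mathtt u_0,\ \mathtt u_1,\dots,\ \mathtt u_1\ldots\mathtt u_\ell=\mathtt u$). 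These two counts must agree, which is where the $-2$'s in the path lengths (two steps consumed per valley: one down into the excursion region and one up out of it, or equivalently the edge to the hanging subtree plus the edge continuing the spine) are exactly accounted for. Once this length/size matching is nailed down, everything else is the commuting-diagram verification plus the inductive hypothesis.
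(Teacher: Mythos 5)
Your proposal is correct and follows essentially the same route as the paper: decompose the Dyck path via $F$ and the tree via $G$, observe that $\Phi$ is by construction $\widetilde{G}$ applied to the $\Phi$-images of the pieces, and conclude by induction. The only (immaterial) organizational differences are that the paper inducts on the height $h$ rather than on the half-length $n$, and keeps the size bookkeeping in a separate lemma (Lemma~\ref{lem:size_fixed}) combined with a partition argument at the end, rather than folding it into a size-refined version of $F$ as you do.
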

The proof makes use of two lemmas; the first verifies that the ``size parameter'' $n$ is preserved by $\Phi$, and the second is an analogue of Lemma~\ref{lem_tree:one_step_recursion}, but for the Dyck path decomposition described above.
\begin{lem}\label{lem:size_fixed}
For all $n \in \Z_{\ge 0}$, $\Phi(\cD(n)) \subseteq \cB(n)$. 
\end{lem}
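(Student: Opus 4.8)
The plan is to prove $\Phi(\cD(n)) \subseteq \cB(n)$ by strong induction on $n$, tracking the number of internal vertices of $\Phi(\path)$ as a function of the lengths of the Dyck paths appearing in the recursive decomposition. The base case $n=0$ is immediate since $\Phi$ sends the length-$0$ path to $\{\emptyset\} = \tau_0$, which has $0$ internal vertices.

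For the inductive step, fix $n \ge 1$ and $\path \in \cD(n)$, and write $h = \|\path\| \ge 1$. First I would observe, directly from the definition of the decomposition, that the lengths of the constituent paths add up correctly. The path $\path^{\mathrm{fix}}$ has length $\sigma_d - \sigma_g - 2$, the path $\path^{\mathrm{free}}$ has length $2n - (\sigma_{\mathrm{last}} - \sigma_g)$, and $\path^{\mathrm{spn}}_j$ has length $\rho_{j+1}-\rho_j-2$ for $j \in \ints{1,\ell}$. Since $\sigma_g < \sigma_d = \rho_1 < \rho_2 < \cdots < \rho_{\ell+1} = \sigma_{\mathrm{last}}$, summing the halved lengths gives
\[
\tfrac{1}{2}(\sigma_d-\sigma_g-2) + \sum_{j=1}^{\ell}\tfrac{1}{2}(\rho_{j+1}-\rho_j-2) + \tfrac{1}{2}\big(2n-(\sigma_{\mathrm{last}}-\sigma_g)\big) = n - (\ell+1).
\]
So if $n_{\mathrm{fix}}, n_{\mathrm{free}}, n_j$ denote the half-lengths of $\path^{\mathrm{fix}}, \path^{\mathrm{free}}, \path^{\mathrm{spn}}_j$ respectively, then $n_{\mathrm{fix}} + n_{\mathrm{free}} + \sum_{j=1}^\ell n_j = n - \ell - 1$, and each of these is strictly less than $n$ (indeed at most $n-1$), so the induction hypothesis applies to each constituent path, giving $\Phi(\path^{\mathrm{fix}}) \in \cB(n_{\mathrm{fix}})$, etc.

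Next I would count internal vertices of $\Phi(\path)$ using the explicit formula for $\Phi(\path)$ (the two displayed unions, one for $h$ even and one for $h$ odd). The spine $\{u_1\ldots u_j : 0 \le j \le \ell\}$ contributes $\ell+1$ vertices $u_1\ldots u_0, \ldots, u_1\ldots u_\ell$, every one of which is internal in $\Phi(\path)$: the vertex $u_1\ldots u_{j-1}$ for $1 \le j \le \ell$ has the children $u_1 \ldots u_j$ (on the spine) and $u_1\ldots u_{j-1}v_j$ (root of the grafted tree $\Phi(\path^{\mathrm{spn}}_j)$), and $u_1 \ldots u_\ell$ has the two children $u_1\ldots u_\ell 1$ and $u_1\ldots u_\ell 2$ rooting $\Phi(\path^{\mathrm{free}})$ and $\Phi(\path^{\mathrm{fix}})$ in some order. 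The remaining internal vertices of $\Phi(\path)$ are exactly the (relocated) internal vertices of the grafted subtrees $\Phi(\path^{\mathrm{fix}}), \Phi(\path^{\mathrm{free}}), \Phi(\path^{\mathrm{spn}}_1), \ldots, \Phi(\path^{\mathrm{spn}}_\ell)$, since grafting a tree at a node $w$ via $w * (\cdot)$ preserves the internal/leaf status of each node of that tree (internality is a local condition on having a child). Hence the number of internal vertices of $\Phi(\path)$ is
\[
(\ell+1) + n_{\mathrm{fix}} + n_{\mathrm{free}} + \sum_{j=1}^\ell n_j = (\ell+1) + (n - \ell - 1) = n,
\]
using the length identity from the previous paragraph together with the induction hypothesis. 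One small point worth spelling out is that $\Phi(\path)$ is indeed a full binary tree — this follows because $\{\emptyset\}$ is full and grafting full binary trees onto a node both of whose children receive grafted subtrees (which is the case at $u_1\ldots u_\ell$) and onto spine ancestors (which, after the grafting, have exactly two children: the next spine vertex and the root of $\Phi(\path_j^{\mathrm{spn}})$) preserves fullness. The main obstacle, if any, is simply being careful with the length bookkeeping and with checking that the decomposition is well-defined (e.g.\ that $\sigma_g < \sigma_{\max} < \sigma_d$ when $h \ge 1$, which holds since $m = \lfloor h/2 \rfloor < h$), but no deep idea is needed: the result is a straightforward conservation-of-size computation.
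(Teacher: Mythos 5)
Your proof is correct and follows essentially the same route as the paper's: induction on $n$, with the key step being the length bookkeeping showing that the half-lengths of $\path^{\mathrm{fix}}$, $\path^{\mathrm{free}}$ and the $\path^{\mathrm{spn}}_j$ sum to $n-(\ell+1)$, which is exactly compensated by the $\ell+1$ internal spine vertices. Your additional remarks on fullness and on why each spine vertex is internal are details the paper leaves implicit, but the argument is the same.
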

\begin{proof}
    This is true by definition when $n=0$. 
    For $n > 0$, for any Dyck path $\path \in \cD(n)$, we have that $\path^{\mathrm{fix}} \in \cD((\sigma_d-\sigma_g)/2-1)$, that $\path^{\mathrm{free}} \in \cD((2n-(\sigma_{\mathrm{last}}-\sigma_g))/2)$, and that $\path^{\mathrm{spn}}_j \in \cD((\rho_{j+1}-\rho_j)/2-1)$ for $j \in \ints{1,\ell}$. Since $\sum_{j=1}^{\ell} (\rho_{j+1}-\rho_j)=\sigma_{\mathrm{last}}-\sigma_d$, and since the elements of $\{u_1 \ldots u_j : 0\leq j\leq \ell\}$ are all internal vertices of $\Phi(\path)$, it follows by induction that the total number of internal vertices of $\Phi(\path)$ is 
    \begin{align*} 
    \frac{\sigma_d-\sigma_g}2-1 + \frac{2n-(\sigma_{\mathrm{last}}-\sigma_g)}2 + \sum_{j=1}^{\ell}\pran{\frac{\rho_{j+1}-\rho_j}2 - 1} + |\{u_1 \ldots u_j : 0\leq j\leq \ell\}|
    = n\, ,
    \end{align*}
    as required. 
\end{proof}
\begin{lem}\label{lem:one_step_recursion0}
For all $h \ge 1$, the function $F$ given by 
\[
\path\stackrel{F}{\longmapsto}\big (\path^{\mathrm{fix}},\path^{\mathrm{free}},((\epsilon_j(\path),\path_j^{\mathrm{spn}}) ; 1\leq j\leq \ell(\path))\big)
\]
is a bijection from $\mathcal{D}_h$ to the set
\[\cD_h':=\mathcal{D}_{h-m-1}\ \times\ \bigcup_{k=m}^{h-1}\mathcal{D}_k \ \times \ \bigcup_{\ell\ge 0}\Big(\big(\{-1\}\times\bigcup_{k=0}^{\halfceil{h}-1}\mathcal{D}_k\big)\, \cup\, \big(\{1\}\times\bigcup_{k=0}^{\halffloor{h}-1}\mathcal{D}_k\big)\Big)^\ell.\]
\end{lem}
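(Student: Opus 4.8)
The strategy is to construct an explicit inverse map $\widetilde{F}:\cD_h' \to \cD_h$ by ``gluing together'' the constituent paths along a spine, exactly mirroring the construction of $\widetilde{G}$ in the proof of Lemma~\ref{lem_tree:one_step_recursion}. Given a tuple $\big(\mathtt{d}^{\mathrm{fix}},\mathtt{d}^{\mathrm{free}},((\epsilon_j,\mathtt{d}_j^{\mathrm{spn}}) ; 1\le j\le \ell)\big)\in\cD_h'$, I first recover $m=\lfloor h/2\rfloor$ and $h-m-1=\lceil h/2\rceil-1$ from $h$. I then reassemble a path $\mathtt d$ as follows: start with the initial segment of $\mathtt{d}^{\mathrm{free}}$ up to its last visit to $m$ before its maximum (this is the part that ``becomes'' $\path|_{\ints{0,\sigma_g}}$), then insert an up-step to height $m+1$; append $\mathtt{d}^{\mathrm{fix}}$ shifted up by $m+1$ and followed by a down-step back to $m$ (this is the excursion on $\ints{\sigma_g,\sigma_d}$ containing the maximum); then, for $j=1,\dots,\ell$ in order, append an $\epsilon_j$-step, the path $\epsilon_j\cdot\mathtt{d}_j^{\mathrm{spn}}$ shifted by $m$, and the reverse $\epsilon_j$-step back to $m$ (these are the excursions away from level $m$ on $\ints{\sigma_d,\sigma_{\mathrm{last}}}$); finally append the terminal segment of $\mathtt{d}^{\mathrm{free}}$ after its last visit to $m$ before its maximum. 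The constraints defining $\cD_h'$ — namely $\|\mathtt{d}^{\mathrm{fix}}\|=h-m-1$, $\|\mathtt{d}^{\mathrm{free}}\|\in\ints{m,h-1}$, and each $\|\mathtt{d}_j^{\mathrm{spn}}\|$ bounded so that the shifted/reflected path stays in $[0,h]$ — are exactly what is needed for this to produce a valid Dyck path of height $h$.

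The next step is to verify the two composition identities. Showing $F\circ\widetilde F=\mathrm{id}$ on $\cD_h'$ requires checking that, for the reassembled path $\mathtt d$, the quantities $\sigma_{\max},\sigma_g,\sigma_d,\sigma_{\mathrm{last}},(\rho_j)$ are located precisely at the junctions of the construction. The key points: $\sigma_{\max}$ lands inside the $\mathtt{d}^{\mathrm{fix}}$-block because $\|\mathtt{d}^{\mathrm{fix}}\|+m+1=h$ while every other block stays strictly below $h$ (here one uses $\|\mathtt{d}_j^{\mathrm{spn}}\|\le \lceil h/2\rceil -1$ when $\epsilon_j=1$, which gives height $\le m+\lceil h/2\rceil-1 < h$ for $h\ge 1$, and $\|\mathtt{d}^{\mathrm{free}}\|\le h-1$); $\sigma_g$ is the junction between the initial $\mathtt{d}^{\mathrm{free}}$-segment and the $\mathtt{d}^{\mathrm{fix}}$-block, because within the initial segment the last visit to $m$ before the max is at its right endpoint by construction, and the $\mathtt{d}^{\mathrm{fix}}$-block does not return to level $m$ until its end; $\sigma_d$ is the end of the $\mathtt{d}^{\mathrm{fix}}$-block; and the $\rho_j$ are exactly the junctions between consecutive spine excursions, since each $\mathtt{d}_j^{\mathrm{spn}}$-excursion stays strictly on one side of level $m$ and hence never revisits $m$ in its interior. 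Then $F(\mathtt d)$ returns the original tuple by the definitions of $\path^{\mathrm{fix}},\path^{\mathrm{free}},\path_j^{\mathrm{spn}},\epsilon_j$. Conversely, $\widetilde F\circ F=\mathrm{id}$ on $\cD_h$ is essentially by construction: Lemma~\ref{lem:path_obvs} guarantees that $F(\path)\in\cD_h'$ for $\path\in\cD_h$, and unwinding the definitions shows that reassembling the pieces recovers $\path$ — one must just check that the decomposition points of $\path$ (last visit to $m$ before $\sigma_{\max}$, first and last visits to $m$ after $\sigma_{\max}$, intermediate visits $\rho_j$) partition $\ints{0,2n}$ into exactly the segments listed, which is immediate from their definitions as consecutive level-$m$ crossing times.

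I expect the main obstacle to be bookkeeping rather than conceptual: carefully writing the gluing map $\widetilde F$ with all the index shifts and the $\pm 1$ separating steps correct, and then rigorously pinning down that the five families of special times of $\mathtt d$ fall exactly at the block boundaries. A subtle point deserving care is the role of $\mathtt{d}^{\mathrm{free}}$: it is itself decomposed (at its own last pre-maximum visit to level $m$) into an ``initial'' and a ``terminal'' piece that are placed at opposite ends of $\mathtt d$, and one must confirm that in $\mathtt d$ the path does return to $m$ after $\sigma_{\max}$ (so that $\sigma_d,\sigma_{\mathrm{last}}$ are well-defined) — this holds because $\|\mathtt{d}^{\mathrm{free}}\|\ge m$ forces the terminal piece of $\mathtt{d}^{\mathrm{free}}$ to begin at level $m$, and if $\mathtt d^{\mathrm{free}}$ never exceeds $m$ then $\sigma_g=\sigma_{\mathrm{last}}$ and the initial/terminal split is trivial. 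Another edge case is $\ell=0$, where $\sigma_d=\sigma_{\mathrm{last}}$ and there are no spine excursions; the construction degenerates gracefully. Once the inverse is correctly specified, both composition checks are routine verifications of the kind already performed for $G$ and $\widetilde G$ in the previous section, so I would present $\widetilde F$ explicitly (by an analogue of formula~\eqref{inverse_G}, giving $\mathtt d(i)$ piecewise), state that $\widetilde F\circ F=\mathrm{id}$ holds by construction, and devote the bulk of the written proof to locating the special times of $\widetilde F$'s output.
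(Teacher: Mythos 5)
Your overall strategy---construct an explicit inverse $\widetilde F$ by concatenating the constituent paths along level $m$, then verify that the special times $\sigma_g,\sigma_{\max},\sigma_d,\sigma_{\mathrm{last}},(\rho_j)$ of the reassembled path fall exactly at the block boundaries---is the same as the paper's. However, there is a concrete error in your specification of $\widetilde F$: you split $\mathtt{d}^{\mathrm{free}}$ at its last visit to level $m$ \emph{before its own maximum}, whereas the correct splitting point is its last visit to level $m$ \emph{overall}, i.e.\ $\sigma^*=\max\{i:\mathtt{d}^{\mathrm{free}}(i)=m\}$. In the forward direction $\path^{\mathrm{free}}$ is $\path|_{\ints{0,\sigma_g}}$ followed by the reindexed tail $\path|_{\ints{\sigma_{\mathrm{last}}+1,2n}}$; since that tail stays strictly below $m$, the index $\sigma_g$ becomes the last visit of $\path^{\mathrm{free}}$ to $m$, \emph{not} its last visit to $m$ before its maximum, because the maximum of $\path^{\mathrm{free}}$ may occur strictly before $\sigma_g$. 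Concretely, for $h=5$, $m=2$ and $\path=(0,1,2,3,4,3,2,3,4,5,4,3,2,1,0)$ one has $\sigma_g=6$, $\ell=0$ and $\path^{\mathrm{free}}=(0,1,2,3,4,3,2,1,0)$; your rule splits this at index $2$ (the last visit to level $2$ before the peak at index $4$) instead of at index $6$, giving $\widetilde F(F(\path))=(0,1,2,3,4,5,4,3,2,3,4,3,2,1,0)\neq\path$, and applying $F$ to this output produces $\ell=1$, so $F\circ\widetilde F\neq\mathrm{id}$ as well. The failure is precisely at the ``subtle point'' you flag: with your splitting rule the terminal piece of $\mathtt{d}^{\mathrm{free}}$ can rise above $m$ and revisit it, corrupting $\sigma_{\mathrm{last}}$ and creating spurious excursions $\rho_j$. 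Replacing your rule by ``last visit to $m$'' (and proving $\sigma^*(\path^{\mathrm{free}})=\sigma_g(\path)$ as above) repairs the argument, and the rest of your plan then goes through as in the paper.

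A secondary slip: your claim that every block other than the $\mathtt{d}^{\mathrm{fix}}$-block stays \emph{strictly} below $h$ is false. A spine excursion above level $m$ reaches height $m+1+\|\mathtt{d}_j^{\mathrm{spn}}\|$, which can equal $h$; for instance $\path=(0,1,2,3,2,1,2,3,2,1,0)$ with $h=3$ has a spine excursion attaining height $3$. This does not actually endanger the location of $\sigma_{\max}$, but the correct justification is that $\sigma_{\max}$ is a \emph{first} hitting time of $h$ and the $\mathtt{d}^{\mathrm{fix}}$-block, which does attain $h$, precedes all spine blocks in the concatenation.
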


\begin{proof}
    Recall that $m=\lfloor h/2 \rfloor$. The fact that $F(\path) \in \cD_h'$ for all $\path \in \cD_h$ follows from Lemma~\ref{lem:path_obvs}.  
    To see that $F$ is indeed a bijection, we construct its inverse.  Given \[
    \big(\mathtt{d}^{\mathrm{fix}},\mathtt{d}^{\mathrm{free}},((\varepsilon_j,\mathtt{d}_j^{\mathrm{spn}});1\leq j\leq l)\big)\in\cD_h'\, ,
    \]
    it is useful to let $n^{\mathrm{fix}}$, $n^{\mathrm{free}}$, and $n_j$ for $j \in \ints{1,l}$ respectively denote the ``half-lengths'' of $\mathtt{d}^{\mathrm{fix}}$, $\mathtt{d}^{\mathrm{free}}$, and $\mathtt{d}_j^{\mathrm{spn}}$ so that $\mathtt{d}^{\mathrm{fix}} \in \cD(n^{\mathrm{fix}})$ and the like. Now, let $\mathtt{d}^{\mathrm{fix},*}:\ints{1,2n^{\mathrm{fix}}+2}\to \Z_{\ge 0}$ be defined by
    \[
    \mathtt{d}^{\mathrm{fix},*}(i) = 
    \begin{cases}
    \mathtt{d}^{\mathrm{fix}}(i-1)+m+1 
    & \mbox{ if }i \in \ints{1,2n^{\mathrm{fix}}+1}\, ,\\
    m   &\mbox{ if }i = 2n^{\mathrm{fix}}+2\, .
    \end{cases}
    \]
    Next, let $\sigma^*=\sigma^*(\mathtt{d}^{\mathrm{free}})=\max(i \in \ints{0,2n^{\mathrm{free}}}: \mathtt{d}^{\mathrm{free}}(i)=m)$. Then let $\mathtt{d}^{\mathrm{free},1}:\ints{0,\sigma^*}\to \Z_{\ge 0}$ be defined by $\mathtt{d}^{\mathrm{free},1}(i)=\mathtt{d}^{\mathrm{free}}(i)$, and let $\mathtt{d}^{\mathrm{free},2}:\ints{1,2n^{\mathrm{free}}-\sigma^*}\to \Z_{\ge 0}$ be defined by $\mathtt{d}^{\mathrm{free},2}(i)=\mathtt{d}^{\mathrm{free}}(\sigma^*+i)$. 
    Finally, for $1 \le j \le l$, let 
    $\mathtt{d}^{\mathrm{spn},*}_j:\ints{1,2n_j+2} \to \Z_{\ge 0}$ be defined by 
    \[
    \mathtt{d}^{\mathrm{spn},*}_j(i) = 
    \begin{cases}
        m+\varepsilon_j+\varepsilon_j\mathtt{d}^{\mathrm{spn}}_j(i-1) & \mbox{ if }i \in \ints{1,2n_j+1}\\
        m &\mbox{ if }i=2n_j+2\, ,
    \end{cases}
    \]
    Then we observe that the following concatenation of sequences yields a Dyck path $\mathtt{d}$:
    \begin{equation}
    \label{inverse_F}
\mathtt{d}=\mathtt{d}^{\mathrm{free},1}\mathtt{d}^{\mathrm{fix},*}\mathtt{d}^{\mathrm{spn},*}_1\ldots \mathtt{d}^{\mathrm{spn},*}_l \mathtt{d}^{\mathrm{free},2}\, .
    \end{equation}
    Moreover, it holds that $\|\mathtt{d}\|$ is equal to the maximum between $\|\mathtt{d}^{\mathrm{free}}\|$, $m+1+\|\mathtt{d}^{\mathrm{fix}}\|$, and the $m+1+\|\mathtt{d}_j^{\mathrm{spn}}\|$ for $j\in\ints{1,l}$ with $\varepsilon_j=1$. By definition of $\cD_h'$, the expression (\ref{inverse_F}) thus defines a map $\widetilde{F}$ from $\cD_h'$ to $\cD_h$. We now claim that $\widetilde{F}\circ F(\path)=\path$ for any Dyck path $\path\in\cD_h$. Indeed, $\path(\sigma_g)=m$ and $\path$ does not come back to $m$ after $\sigma_{\mathrm{last}}$, by definition, so we have $\sigma^*(\path^{\mathrm{free}})=\sigma_g(\path)$, which then entails the claim by construction. 
    
    Lastly, we want to show that $F\circ\widetilde{F}$ is the identity on $\cD_h'$. By construction, this amounts to proving that $\sigma_g(\mathtt{d})=\sigma^*$, $\sigma_d(\mathtt{d})-\sigma_g(\mathtt{d})-2=2n^{\mathrm{fix}}$, $\ell(\mathtt{d})=l$, and $\rho_{j+1}(\mathtt{d})-\rho_j(\mathtt{d})=2n_j$ for all $j\in\ints{1,l}$.
    From (\ref{inverse_F}) and the definition of $\cD_h'$, it is clear that $\sigma_{\max}(\mathtt{d})=\sigma^*+\min(i\,:\,\mathtt{d}^{\mathrm{fix},*}(i)=h)$. It then follows that $\sigma_d(\mathtt{d})=\sigma^*+2n^{\mathrm{fix}}+2$ and $\sigma_g(\mathtt{d})=\sigma^*$, because $\mathtt{d}^{\mathrm{free}}(\sigma^*)=m$ by definition. Furthermore, $\ell(\mathtt{d})$ is the number of times that the sequences $\mathtt{d}_1^{\mathrm{spn},*},\ldots,\mathtt{d}_l^{\mathrm{spn},*},\mathtt{d}^{\mathrm{free},2}$ hit $m$. Finally, we readily see that $\mathtt{d}^{\mathrm{free},2}$ does not hit $m$, by definition of $\sigma^*$, and that each $\mathtt{d}_j^{\mathrm{spn},*}$ for $j\in\ints{1,l}$ hits $m$ exactly once and at time $2n_j+2$. This concludes the proof.
\end{proof}
\begin{proof}[Proof of Theorem~\ref{thm:main0}]
We will show by induction over $h$ that $\Phi$ is an injective map from $\cD_h$ to $\cB_h$; this holds by definition for $h=0$. Now, fix $h\geq 1$ and suppose that for each $h'<h$, for each $\path\in \cD_{h'}$, $\Phi(\path)\in \cB_{h'}$.
Then writing $m=\lfloor h/2\rfloor$, by Lemma \ref{lem:one_step_recursion0},
\begin{align*}&F(\path)=\big (\path^{\mathrm{fix}},\path^{\mathrm{free}},(\epsilon_j,\path_j^{\mathrm{spn}} ; 1\leq j\leq \ell)\big)\\& \quad \in \mathcal{D}_{h-m-1}\ \times\ \bigcup_{k=m}^{h-1}\mathcal{D}_k \ \times \ \bigcup_{\ell\ge 0}\Big(\big(\{-1\}\times\bigcup_{k=0}^{m-1}\mathcal{D}_k\big)\, \cup\, \big(\{1\}\times\bigcup_{k=0}^{h-m-1}\mathcal{D}_k\big)\Big)^\ell\, .\end{align*} 
Then, by the induction hypothesis, $\Phi(\path^{\mathrm{fix}})\in \cB_{h-m-1}$, $\Phi(\path^{\mathrm{free}})\in \cup_{k=m}^{h-1}\cB_k$, and for each $1\leq j \leq \ell$, if $\epsilon_j=-1$ then  $\Phi(\path^{\mathrm{spn}}_j)\in \cup_{k=0}^{m-1}\cB_k$ and if $\epsilon_j=1$ then  $\Phi(\path^{\mathrm{spn}}_j)\in \cup_{k=0}^{h-m-1}\cB_k$. Moreover, the construction of $\Phi(\path)$ from $\Phi(\path^{\mathrm{fix}})$, $\Phi(\path^{\mathrm{free}})$ and $\Phi(\path^{\mathrm{spn}}_1),\dots, \Phi(\path^{\mathrm{spn}}_\ell)$ 
exactly corresponds to the inverse (\ref{inverse_G}) of the invertible decomposition $G$ of a binary tree with Horton--Strahler number $h$ given in 
Lemma~\ref{lem_tree:one_step_recursion}, which shows that $\Phi(\path)\in \cB_{h}$. Lastly,  as the maps $G$ and $F$ from  Lemmas~\ref{lem_tree:one_step_recursion} and~\ref{lem:one_step_recursion0} are both invertible, the construction of $\Phi(\path)$ from $\path$ is invertible by induction, since the paths 
$\path^{\mathrm{fix}}$, $\path^{\mathrm{free}}$ and $(\path^{\mathrm{spn}}_j,j \in \ints{1,\ell})$ all have height strictly less than $h$. 

The fact that $\Phi$ is in fact a bijection from $\cD_h$ to $\cB_h$ can likewise proved inductively, using that $F$ and $G$ are bijections. Combining this fact with 
Lemma~\ref{lem:size_fixed}, it follows that 
\[
\Phi(\cD_{n,h}) \subset \cB_{n,h}
\]
for all $n,h \in \Z_{\ge 0}$. 
Since the sets $(\cD_{n,h},n \ge 0)$ partition $\cD_h$ and the sets $(\cB_{n,h},n \ge 0)$ partition $\cB_h$, it must then be that in fact $\Phi(\cD_{n,h})=\cB_{n,h}$ for all $n,h \in \Z_{\ge 0}$, which proves the theorem. 
\end{proof}

\bibliographystyle{plainnat}
\bibliography{hs}

\end{document}